\documentclass[12pt]{amsart}
\textwidth=6.5in \oddsidemargin=0in \evensidemargin=0in
\topmargin=0in \textheight=8.5in

\usepackage[all]{xy}

\usepackage{graphicx}  

\usepackage{amsmath}
\usepackage{amssymb}
\usepackage{amsfonts}
\usepackage{mathrsfs}
\usepackage{mathtools}
\usepackage[all]{xy}
\usepackage{color}

\newcommand{\B}{\mbox{${\mathcal B}$}}
\newcommand{\C}{\mbox{${\mathcal C}$}}
\newcommand{\D}{\mbox{${\mathcal D}$}}

\newcommand{\F}{\mbox{${\mathcal F}$}}

\renewcommand{\H}{\mbox{${\mathcal H}$}}

\newcommand{\J}{\mbox{${\mathcal J}$}}
\newcommand{\K}{\mbox{${\mathcal K}$}}
\newcommand{\Ll}{\mbox{${\mathcal L}$}}
\newcommand{\M}{\mbox{${\mathcal M}$}}
\newcommand{\N}{\mbox{${\mathcal N}$}}

\newcommand{\T}{\mbox{${\mathcal T}$}}

\newcommand{\V}{\mbox{${\mathcal V}$}}

\newcommand{\X}{\mbox{${\mathcal X}$}}

\newcommand{\fecho}[1]{\overline{#1}}
\newcommand{\intern}[1]{\langle #1\rangle}
\newcommand{\sspan}{\operatorname{span}}

\newtheorem{theorem}{Theorem}[section]
\newtheorem{lemma}[theorem]{Lemma}
\newtheorem{corollary}[theorem]{Corollary}
\newtheorem{proposition}[theorem]{Proposition}

\theoremstyle{remark}
\newtheorem{remark}[theorem]{\bf Remark}
\theoremstyle{definition}
\newtheorem{example}[theorem]{\bf Example}
\numberwithin{equation}{section}
%***********************************************************************

%***********************************************************************

\def\spn{\textup{span}}
%***********************************************************************
\begin{document}

\title[Lie modules of nest algebras]{Weakly closed  Lie modules of nest algebras}

\author[L. Oliveira]{Lina Oliveira}
\address{Center for Mathematical Analysis,
Geometry and Dynamical Systems {\sl{and}} Department of Mathematics\\
Instituto Superior T\'{e}cnico, Universidade de Lisboa
\\ Av. Rovisco Pais
1049-001 Lisboa Portugal}
\email{linaoliv@math.tecnico.ulisboa.pt}

\author[M. Santos]{Miguel Santos}
\address{Instituto Superior T\'{e}cnico, Universidade de Lisboa
\\ Av. Rovisco Pais
1049-001 Lisboa Portugal}
\email{miguel.m.santos@ist.utl.pt }
\date{December 10, 2015}

\keywords{Bimodule, Lie ideal, Lie module, nest algebra}
\thanks{The first author was partially funded by FCT/Portugal through UID/MAT/04459/2013 and EXCL/MAT-GEO/0222/2012.  The second author was partially supported by a fellowship of the program ``Novos Talentos em Matem\'{a}tica'' of the Gulbenkian Foundation.}

\subjclass[2010]{47L35, 46K50, 17B60}

\begin{abstract}
Let $\T(\N)$ be a nest algebra of  operators on Hilbert space and let $\Ll$ be a weakly closed Lie $\T(\N)$-module. 
We construct explicitly the largest possible weakly closed $\T(\N)$-bimodule $\J(\Ll)$ and a weakly closed $\T(\N)$-bimodule $\K(\Ll)$
such  that 
$$
\J(\Ll)\subseteq \Ll \subseteq  \K(\Ll) +\D_{\K(\Ll)},
$$
 $[\K(\Ll), \T(\N)]\subseteq \Ll$ and  $\D_{\K(\Ll)}$ is a von Neumann subalgebra of the diagonal $\T(\N)\cap \T(\N)^*$. 
\end{abstract}

\maketitle

\section{Introduction}\label{prelim}
It has been established in  \cite{HMS} 
 that  any weakly closed Lie ideal $\Ll$ of a nest algebra $\T(\N)$ of operators on Hilbert space contains a weakly closed associative ideal of $\T(\N)$ and   is contained in a sum of this ideal with a von Neumann  subalgebra of the diagonal  $\D(\N)$ of the nest algebra. That is to say that there exist a weakly closed associative ideal $\K(\Ll)$ and a
von Neumann subalgebra
$\D_{\K(\Ll)}$ of $\D(\N)$ such that
\begin{equation}\label{00}
\K(\Ll)\subseteq \Ll\subseteq \K(\Ll)+\D_{\K(\Ll)}.
\end{equation}

The purpose of the present  work is to show that a similar result holds when we pass from ideals to modules. More precisely,  the main result Theorem \ref{t_sum} asserts that, if 
 $\Ll$ is a weakly closed Lie $\T(\N)$-module, then
 \begin{equation}\label{01}
\J(\Ll)\subseteq \Ll \subseteq  \K(\Ll) +\D_{\K(\Ll)},
\end{equation}
 where  $\D_{\K(\Ll)}$ is a von Neumman subalgebra  of the diagonal $\D(\N)$, 
 $$\J(\Ll) = \fecho{\sspan}^w\bigl(\{Q\B(\H) P^\perp  \colon  P,Q\in \N, Q\B(\H) P^\perp \subseteq \Ll\}\bigr)$$
  is the largest weakly closed  $\T(\N)$-bimodule contained in $\Ll$ and $\K(\Ll)$ is a weakly closed  $\T(\N)$-bimodule such that  $[\K(\Ll),\T(\N)]\subseteq \Ll$, a result reminiscent of \cite{FMS}, Theorem 2. 
  
  Neither is it  necessarily the case that $\J(\Ll)$ be   a subset of $\K(\Ll)$  nor  that $\Ll$ be contained in  $\K(\Ll)$, as Example \ref{example} shows.
 However, when $\Ll$ is in fact a weakly closed Lie ideal,  a refinement of both \eqref{00} and \eqref{01} can be obtained, as is outlined in Remark \ref{finalrem}. In this situation, \eqref{00} and \eqref{01} coalesce yielding
  \begin{equation}\label{02}
\K(\Ll)\subseteq \J(\Ll)\subseteq \Ll \subseteq  \K(\Ll) +\D_{\K(\Ll)},
\end{equation}
and $\K(\Ll)$ might even be a proper subset of $\J(\Ll)$.

 \medskip
 This work is organised in three sections.  Section \ref{bimod} addresses the question of describing the weakly closed $\T(\N)$-bimodules, and this is done in an essentially different way from those of \cite{DDH, EP}. The main result of Section \ref{bimod} is Lemma \ref{l_eqcond} which gives several characterisations of  weakly closed  $\T(\N)$-bimodules. This lemma is then used to prove Corollary \ref{c_largebim}, which provides a constructive description of the largest  weakly closed  $\T(\N)$-bimodule contained in any given weakly closed  subspace of $\T(\N)$.
 Section \ref{liemod} is devoted to the proof of Theorem \ref{t_sum}.

The notation is set  in this final part of  Section \ref{prelim}  and   some facts needed in the sequel are also recalled.

\medskip

Let $\H$ be a complex Hilbert space, let $\B(\H)$ be the complex Banach space of bounded linear operators on $\H$ and let $\F_1(\H)$ be  the set of rank one operators in $\B(\H)$.
A totally ordered family $\N  $ of projections in $\B(\H)$ containing $0 $ and the identity $I $
is said to be a \textit{nest}. If, furthermore, $\N  $ is a complete sublattice
of the lattice of projections in $\B(\H)$,  then $\N $ is called a \emph{complete 
nest}. 
 The \emph{nest 
algebra} $\T(\N)$ associated 
with a nest $\N  $ is the subalgebra of all operators $T $ in $B  (\H )     $ such that, for all projections $P $ 
in $\N  $, 
 $T(P  (\H ))
\subseteq P  (\H ), 
$
or, equivalently,
an operator $T $ in $B  (\H ) $ lies in $\T(\N)  $ if and only 
if, for all projections $P $ in the nest $\N  $,
$   
P^\perp   T P = 0
$, where $P^\perp = I-P$. Each nest is 
contained in a complete nest which generates the same nest algebra 
(cf. \cite{kd, ring}). Henceforth only complete nests will be considered. 

The algebra $\T(\N)  $ is a  
weakly closed subalgebra of
$B  (\H ) $, the \emph{diagonal} $\D(\N)$ of which   is the von Neumann algebra defined by $\D(\N)=\T(\N)\cap \T(\N)^*$.

A nest algebra $\T(\N)$ together with the product defined, for all operators $T$ and $S$ in $\T(\N)$, by 
$
[T,S]=TS-ST
$ 
is a Lie algebra. A  complex subspace $\M$ of  $\B(\H) $
is said to be a \emph{$\T(\N)$-bimodule} if $\M\T(\N),\T(\N)\M \subseteq \M$ and is called a \emph{Lie $\T(\N)$-module}  if $[\M, \T(\N)]\subseteq \M$. Lie $\T(\N)$-modules and  $\T(\N)$-bimodules contained in the nest algebra $\T(\N)$ are called, respectively, Lie ideals and ideals of $\T(\N)$.
In the sequel,  Lie $\T(\N)$-modules may be referred to as Lie modules for simplicity. For the same reason,  $\T(\N)$-bimodules may  be called  simply bimodules.

Let  $x$ and $y$ be elements of the Hilbert space  $\H$ and  let $x\otimes y$ be the rank one operator   defined, for all $z$ in $\H$, by 
 $
  z\mapsto \intern{z, x}y
  $,
   where $ \intern{\cdot, \cdot}$ denotes the inner product of $\H$.   Let $P$ be a projection in the nest $\N$ and  let $P_{-}$  be the projection in $\N$ defined by  $P_{-} =\vee \{Q\in \N: Q<P\}$. 
 A rank one operator $x\otimes y$ lies in $\T(\N)$ if, and only if,  there exists a projection $P$ such that 
   $P_- x=0$
    and 
  $ Py=y$; moreover, $P$ can be chosen to be equal to  $\bigwedge\{Q\in \N: Qy=y\}$ (cf. \cite{ring}). 
For  the general theory  of nest algebras, the reader is referred to \cite{kd, ring}.

In what follows, the closure  in the norm topology of the span  of a subset  $\X$ of $\B(\H)$  will be  denoted by $\fecho{\sspan}(\X)$, whereas its closure in the weak operator topology will be  denoted by  $\fecho{\sspan}^w(\X)$.  All subspaces either of $\H$ or  of $\B(\H)$ are assumed to be complex subspaces. 

%*******************************************************************************************************************
\section{Bimodules}\label{bimod}
%*******************************************************************************************************************
Let $z$ be an element of the Hilbert space $\H$ and   let  $P_z$ and $\hat{P}_z$ be the  projections  defined by
 \[
 P_z=\wedge\{Q\in \N: Qz=z\}\quad  \text{,} \quad \quad \hat{P}_z=\vee \{Q\in \N: Qz=0\}.
 \]
 The projections $P_z$ and $\hat{P}_z$ lie in the nest $\N$ and 
 $
 P_zz=z$,  $\hat{P}_z z=0$.
Following \cite{LO2},  each rank one operator $x\otimes y$ will be associated with the projections $\hat{P}_x$ and $P_y$. 
\begin{remark}\label{rem01}
Observe that, for all $x,y\in \H$ and all $T\in \T(\N)$,  
 $P_{Ty}\leq P_y$ and  $\hat{P}_{x}\leq \hat{P}_{T^*x}$. In fact,
 $
Ty =P_yTP_yy=P_yTy
$
 which shows that $P_{Ty}\leq P_y$. Similarly  
$,
T^*x=\hat{P}_{x}^\perp  T^*\hat{P}_{x}^\perp  x
=\hat{P}_{x}^\perp  T^* x
$,
yielding that $\hat{P}_{x}\leq \hat{P}_{T^*x}$.
\end{remark}

\begin{lemma}\label{l_base}
Let $\V$ be a weakly closed $T(\N)$-bimodule and let $x\otimes y$ be a rank one operator  in $\B(\H)$. Then $x\otimes y$ lies in $\V$ if and only if $P_y \B(\H) \hat{P}_x^\perp \subseteq \V$.
\end{lemma}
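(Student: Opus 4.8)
The plan is to establish the two implications separately, the reverse one being a one-line computation. Assume first that $P_y\B(\H)\hat{P}_x^\perp\subseteq\V$. Since $P_yy=y$ and $\hat{P}_xx=0$, so that $\hat{P}_x^\perp x=x$, the rank one calculus gives $P_y(x\otimes y)\hat{P}_x^\perp=(\hat{P}_x^\perp x)\otimes(P_yy)=x\otimes y$, whence $x\otimes y\in P_y\B(\H)\hat{P}_x^\perp\subseteq\V$.

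For the forward implication, suppose $x\otimes y\in\V$. As $\V$ is a bimodule, $S(x\otimes y)T\in\V$ for all $S,T\in\T(\N)$, and since $S(x\otimes y)T=(T^*x)\otimes(Sy)$, every rank one operator $(T^*x)\otimes(Sy)$ lies in $\V$. I would first reduce the goal to a statement about such rank one operators by recording the identity
\[
P_y\B(\H)\hat{P}_x^\perp=\fecho{\sspan}^w\bigl(\{a\otimes b\colon a\in\hat{P}_x^\perp\H,\ b\in P_y\H\}\bigr),
\]
whose inclusion ``$\supseteq$'' holds because the finite rank operators $P_yF\hat{P}_x^\perp$ are weakly dense in $P_y\B(\H)\hat{P}_x^\perp$ and decompose into such rank ones, while ``$\subseteq$'' follows from $P_y(a\otimes b)\hat{P}_x^\perp=a\otimes b$. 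Because $\V$ is a weakly closed subspace, it then suffices to prove that $a\otimes b\in\V$ whenever $a\in\hat{P}_x^\perp\H$ and $b\in P_y\H$.

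The heart of the matter is the pair of norm-density statements $\fecho{\{Sy\colon S\in\T(\N)\}}=P_y\H$ and $\fecho{\{T^*x\colon T\in\T(\N)\}}=\hat{P}_x^\perp\H$. Granting them, I fix $S$ and let $T$ vary: the map $a\mapsto a\otimes(Sy)$ is continuous from the weak topology of $\H$ to the weak operator topology, so the weak density of $\{T^*x\}$ in $\hat{P}_x^\perp\H$ together with the weak closedness of $\V$ gives $a\otimes(Sy)\in\V$ for every $a\in\hat{P}_x^\perp\H$. Fixing such an $a$ and letting $S$ vary, the same argument applied to $b\mapsto a\otimes b$ with $\{Sy\}$ weakly dense in $P_y\H$ yields $a\otimes b\in\V$ for all admissible $a$ and $b$, and the displayed identity completes the forward implication.

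The main obstacle is the density $\fecho{\{Sy\colon S\in\T(\N)\}}=P_y\H$; the companion statement follows from it by passing to the nest $\N^\perp=\{P^\perp\colon P\in\N\}$, using $\T(\N)^*=\T(\N^\perp)$ and $\wedge\{P^\perp\colon P\in\N,\ Px=0\}=\hat{P}_x^\perp$. The inclusion $\fecho{\{Sy\}}\subseteq P_y\H$ is Remark \ref{rem01}. For the reverse, let $v\in P_y\H$. Whenever some $P\in\N$ satisfies $Pv=v$ and $P_-^\perp y\neq0$, putting $u=P_-^\perp y$ makes $u\otimes v$ a rank one operator of $\T(\N)$ (by the characterisation recalled in Section \ref{prelim}) with $(u\otimes v)y=\|P_-^\perp y\|^2\,v$, so that $v\in\{Sy\}$. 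Taking $P=P_v$, the minimality of $P_y$ forces $P_-^\perp y\neq0$ both when $P_v<P_y$ and when $P_v=P_y$ with $(P_y)_-<P_y$. The remaining continuous case $P_v=P_y=(P_y)_-$ is treated by approximation: for $Q\in\N$ with $Q<P_y$ one has $P_{Qv}\le Q<P_y$, so $Qv\in\{Sy\}$ by the previous case, while $Qv\to P_yv=v$ in norm as $Q\uparrow(P_y)_-=P_y$. Thus $v\in\fecho{\{Sy\}}$, as required.
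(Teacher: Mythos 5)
Your proof is correct, but it follows a genuinely different route from the paper's. The paper disposes of the forward implication in a few lines by invoking the Erdos--Power representation theorem (\cite{EP}, Theorem 1.5): it takes the left order continuous homomorphism $P\mapsto\tilde P$ with $\V=\{T\colon \tilde P^\perp TP=0\}$, notes that $x\otimes y\in\V$ forces $P_y\le\tilde P$ whenever $\hat P_x<P$, and checks $\tilde P^\perp P_yT\hat P_x^\perp P=0$ in the two cases $P\le\hat P_x$ and $\hat P_x<P$. You instead give a self-contained argument: you reduce $P_y\B(\H)\hat P_x^\perp$ to the weakly closed span of the rank ones $a\otimes b$ with $a\in\hat P_x^\perp\H$, $b\in P_y\H$, and obtain these from $(T^*x)\otimes(Sy)=S(x\otimes y)T\in\V$ via the densities $\fecho{\T(\N)y}=P_y\H$ and $\fecho{\T(\N)^*x}=\hat P_x^\perp\H$, which you prove correctly by exhibiting rank one operators of $\T(\N)$ built from $(P_v)_-^\perp y$ and handling the continuous case $P_v=P_y=(P_y)_-$ by approximation along $Q\uparrow P_y$; the duality $\T(\N)^*=\T(\N^\perp)$ cleanly delivers the companion density. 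Your version is longer but elementary and independent of \cite{EP}, which is in the spirit of the paper's own Remark \ref{rem03} about avoiding reliance on deeper structure results; the paper's version is shorter at the cost of importing the full Erdos--Power theorem. One trivial slip: in your displayed identity the labels ``$\subseteq$'' and ``$\supseteq$'' are interchanged relative to the justifications you attach to them (the density-and-decomposition argument proves $P_y\B(\H)\hat P_x^\perp\subseteq\fecho{\sspan}^w(\{a\otimes b\})$, while $P_y(a\otimes b)\hat P_x^\perp=a\otimes b$ together with the weak closedness of $P_y\B(\H)\hat P_x^\perp$ proves the reverse inclusion); the mathematics is unaffected.
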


\begin{proof}
Let $x\otimes y$ be a rank one operator lying in $\V$. Then, by \cite{EP}, Theorem 1.5, there exists a left order continuous homomorphism $P\mapsto \tilde{P}$ on $\N$ such that an operator $T\in \B(\H)$ lies in $\V$ if and only if, for all $P\in \N$, 
$
 \tilde{P}^\perp  TP=0
$. 
Let $T$ lie in $P_yB(\H) \hat{P}_x^\perp$ and suppose that $P\in \N$ is a projection such that  $P\leq  \hat{P}_x$. Then, 
\begin{equation}\label{07}
 \tilde{P}^\perp  TP= \tilde{P}^\perp  P_yT\hat{P}_x^\perp  P=0,
\end{equation}
which shows that $T\in \V$.
 Suppose now that $\hat{P}_x<P$.   Since $x\otimes y\in \V$, by the definition of $\tilde{P}$,   $P_y\leq \tilde{P}$ (see \cite{EP}, p. 221). Hence
\begin{equation}\label{08}
\tilde{P}^\perp  TP= \tilde{P}^\perp  P_yT\hat{P}_x^\perp  P=0.
\end{equation}
Combining \eqref{07}-\eqref{08} yields that $P_yB(\H) \hat{P}_x^\perp \subseteq \V$. 

The converse assertion is clear.
\end{proof}

The underlying role of the rank one operators in the construction of a bimodule, outlined by Lemma~\ref{l_base}, suggests that the following definitions be made.  Let $\M$ be a   subspace of $\B(\H)$ and let  $\C(\M)$ 
   be the subset of $\B(\H)$  defined  by 
\begin{equation}\label{03}
\C(\M) =\{x\otimes y\in \B(\H)\colon P_y\B(\H) \hat{P}_x^\perp  \subseteq \M\},
\end{equation}
and let  
\begin{equation}\label{eq6}
\J(\M)=\fecho{\sspan}^w (\C(\M))
\end{equation}
 Since $x\otimes y=P_y(x\otimes y) \hat{P}_x^\perp $, for any given rank one operator $x\otimes y$, it immediately follows that $\C(\M)\subseteq \M$ and, consequently, it is also the case that $\sspan( \C(\M))\subseteq \M$.

\begin{lemma}\label{l_uem}
Let $\M$ be  a subspace of $\B(\H)$. Then $\sspan(\C(\M))$  is a 
 $\T(\N)$-bimodule contained in $\M$, and 
 $\J(\M)$ is a weakly closed $\T(\N)$-bimodule such that 
$$
\J(\M) =\fecho{\sspan}^w (\C(\M))= \fecho{\sspan}^w\bigl(\bigcup_{ x\otimes y\in \C(\M)}  P_y\B(\H) \hat{P}_x^\perp \bigr).
$$
   \end{lemma}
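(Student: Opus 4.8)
The plan is to prove the three assertions in turn, the engine being the rank one multiplication formulas combined with Remark~\ref{rem01} for the first two parts and with Lemma~\ref{l_base} for the displayed identity.

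First I would show that $\sspan(\C(\M))$ is a $\T(\N)$-bimodule. The inclusion $\sspan(\C(\M))\subseteq\M$ has already been recorded, so only the module property is at issue, and since multiplication is linear it suffices to check it on the generators $x\otimes y\in\C(\M)$. A direct computation gives $T(x\otimes y)=x\otimes Ty$ and $(x\otimes y)T=T^*x\otimes y$ for every $T\in\T(\N)$. To see these remain in $\C(\M)$, I would invoke Remark~\ref{rem01}: from $P_{Ty}\leq P_y$ one gets $P_{Ty}\B(\H)\subseteq P_y\B(\H)$, whence $P_{Ty}\B(\H)\hat{P}_x^\perp\subseteq P_y\B(\H)\hat{P}_x^\perp\subseteq\M$, so $x\otimes Ty\in\C(\M)$; and from $\hat{P}_x\leq\hat{P}_{T^*x}$, equivalently $\hat{P}_{T^*x}^\perp\leq\hat{P}_x^\perp$, one gets $\B(\H)\hat{P}_{T^*x}^\perp\subseteq\B(\H)\hat{P}_x^\perp$, whence $P_y\B(\H)\hat{P}_{T^*x}^\perp\subseteq\M$ and $T^*x\otimes y\in\C(\M)$. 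By linearity $\sspan(\C(\M))$ is then closed under left and right multiplication by $\T(\N)$.

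Next I would pass to the weak closure. Since for fixed $T$ the maps $S\mapsto TS$ and $S\mapsto ST$ are continuous in the weak operator topology, the weak closure of a bimodule is again a bimodule; hence $\J(\M)=\fecho{\sspan}^w(\C(\M))$ is a weakly closed $\T(\N)$-bimodule, and in particular $\C(\M)\subseteq\J(\M)$. Finally I would prove the displayed identity by two inclusions, writing $U=\bigcup_{x\otimes y\in\C(\M)}P_y\B(\H)\hat{P}_x^\perp$. For $\fecho{\sspan}^w(\C(\M))\subseteq\fecho{\sspan}^w(U)$ I would use the factorisation $x\otimes y=P_y(x\otimes y)\hat{P}_x^\perp$, which shows $x\otimes y\in P_y\B(\H)\hat{P}_x^\perp\subseteq U$ for each $x\otimes y\in\C(\M)$, so $\C(\M)\subseteq U$. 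For the reverse inclusion $U\subseteq\J(\M)$, each $x\otimes y\in\C(\M)$ already lies in the weakly closed bimodule $\J(\M)$, so Lemma~\ref{l_base} applied with $\V=\J(\M)$ yields $P_y\B(\H)\hat{P}_x^\perp\subseteq\J(\M)$; taking the union gives $U\subseteq\J(\M)$, and since $\J(\M)$ is a weakly closed subspace, $\fecho{\sspan}^w(U)\subseteq\J(\M)=\fecho{\sspan}^w(\C(\M))$.

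The main obstacle is one of ordering rather than of computation: the reverse inclusion in the final identity rests on applying Lemma~\ref{l_base} to $\J(\M)$ itself, which is only legitimate once $\J(\M)$ has been established as a weakly closed bimodule. Everything else is a routine assembly of the rank one multiplication formulas, the monotonicity relations of Remark~\ref{rem01}, and weak-operator continuity of multiplication.
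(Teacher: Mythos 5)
Your proposal is correct and follows essentially the same route as the paper: the rank one multiplication formulas together with Remark~\ref{rem01} give the module property of $\sspan(\C(\M))$, separate weak-operator continuity of multiplication passes it to the closure, and the displayed identity is obtained by the same two inclusions, with Lemma~\ref{l_base} applied to $\J(\M)$ for the nontrivial one. The ordering point you flag at the end is exactly how the paper arranges the argument as well.
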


 \begin{proof}
To prove that $\sspan(\C(\M))$  is a 
 $\T(\N)$-bimodule contained in $\M$, it suffices to show that, for all $T\in \T(\N)$,    $T\C(\M), \C(\M)T\subseteq \C(\M)$.
 Suppose that $x\otimes y$ is an operator in  $\C(\M)$ and consider the rank one operator $T(x\otimes y)=x\otimes Ty$. Then, since $P_{Ty}\leq P_y$ (see Remark \ref{rem01}) and $P_{Ty}\B(\H) \subseteq \B(\H)$, it follows that
\[
P_{Ty}\B(\H) \hat{P}_x^\perp =P_yP_{Ty}\B(\H) \hat{P}_x^\perp \subseteq P_y\B(\H) \hat{P}_x^\perp \subseteq \M.
\]
Hence, $T(x\otimes y)$ lies in $\C(\M)$.
Similarly, considering the product $(x\otimes y)T=T^*x\otimes y$, 
\[
P_{y}\B(\H) \hat{P}_{T^*x}^\perp =P_y\B(\H) \hat{P}_{T^*x}^\perp  \hat{P}_x^\perp \subseteq P_y\B(\H) \hat{P}_x^\perp \subseteq \M,
\]
which shows that $(x\otimes y)T$ lies in $\C(\M)$.
Consequently,  both $\T(\N)\C(\M)$ and $\C(\M) \T(\N)$ are subsets of  $\C(\M)$ and, therefore, $\sspan( \C(\M))$ is a bimodule.  
 Since the multiplication in $\B(\H)$ is  separately continuous  in the weak operator topology, it follows that the closure of a bimodule in this topology is also a bimodule. Hence,  $\J(\M)$ is a weakly closed bimodule.

It is clear that any operator $x\otimes y$  in $\C(\M)$ must also lie $\J(\M)$. Hence, by Lemma \ref{l_base}, $P_y\B(\H) \hat{P}_x^\perp\subseteq \J(\M)$. Consequently, 
$
\bigcup_{ x\otimes y\in \C(\M)}  P_y\B(\H) \hat{P}_x^\perp \subseteq \J(\M)
$
and, therefore, 

\begin{equation}\label{eq01}
\fecho{\sspan}^w\bigl(\bigcup_{ x\otimes y\in \C(\M)}  P_y\B(\H) \hat{P}_x^\perp \bigr)\subseteq \J(\M).
\end{equation}

\noindent
On the other hand, since $x\otimes y\in P_y\B(\H) \hat{P}_x^\perp$ for any given operator $x\otimes y$, it follows that 
$$
\sspan(\C(\M))\subseteq \sspan \bigl(\bigcup_{ x\otimes y\in \C(\M)}  P_y\B(\H) \hat{P}_x^\perp\bigr).
$$
Hence
\begin{equation}\label{eq02}
\J(\M)=\fecho{\sspan}^w (\C(\M))\subseteq \fecho{\sspan}^w \bigl(\bigcup_{ x\otimes y\in \C(\M)}  P_y\B(\H) \hat{P}_x^\perp\bigr).
\end{equation}

Combining (\ref{eq01})-(\ref{eq02}) yields that 
$$
\J(\M) = \fecho{\sspan}^w\bigl(\bigcup_{ x\otimes y\in \C(\M)}  P_y\B(\H) \hat{P}_x^\perp \bigr),
$$
as required.
\end{proof}

\begin{remark}\label{rem02}
It is obvious from Lemma~\ref{l_uem}  that $
\fecho{\sspan}(\C(\M))
$   is the smallest norm closed  bimodule containing $\C(\M)$. It is also clear that   $
\J(\M)
$
 is the smallest   weakly closed  bimodule containing $\C(\M)$.
\end{remark}

\begin{proposition}\label{p_eqcond} Let $\M$ be a  subspace of $\B(\H)$ and let $\J(\M)$ be as in \eqref{eq6}. Then  $\M$ is a weakly closed $\T(\N)$-bimodule if and only if 
$
{\M} =\J(\M).
$
\end{proposition}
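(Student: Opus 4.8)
The plan is to treat the two implications separately, the reverse being immediate and the forward one carrying the real content. For the reverse implication, if $\M=\J(\M)$ then Lemma~\ref{l_uem} already asserts that $\J(\M)$ is a weakly closed $\T(\N)$-bimodule, so $\M$ is one too and nothing further is needed.

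For the forward implication, assume $\M$ is a weakly closed $\T(\N)$-bimodule and establish the two inclusions $\J(\M)\subseteq\M$ and $\M\subseteq\J(\M)$. The first is routine: since $\C(\M)\subseteq\M$ (as noted after \eqref{eq6}) and $\M$ is a subspace, $\sspan(\C(\M))\subseteq\M$, and because $\M$ is weakly closed this persists under weak closure, giving $\J(\M)\subseteq\M$. For the second inclusion I would first pin down which rank one operators lie in $\M$. By Lemma~\ref{l_base}, a rank one operator $x\otimes y$ belongs to $\M$ if and only if $P_y\B(\H)\hat{P}_x^\perp\subseteq\M$, that is, if and only if $x\otimes y\in\C(\M)$. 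Hence the rank one operators contained in $\M$ are exactly the elements of $\C(\M)$. The remaining, and decisive, step is to show that $\M$ coincides with the weak closure of the linear span of the rank one operators it contains; granting this, $\M=\fecho{\sspan}^w(\C(\M))=\J(\M)$, as required.

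To justify that decisive step I would lean on the Erdos--Power description of weakly closed bimodules from \cite{EP}, Theorem~1.5, already used in the proof of Lemma~\ref{l_base}: the left order continuous homomorphism $P\mapsto\tilde P$ representing a weakly closed bimodule is itself determined by the rank one operators the bimodule contains. Applying this to $\M$ and to the weakly closed bimodule $\J(\M)$ of Lemma~\ref{l_uem}, it then suffices to check that the two share the same rank one operators, i.e.\ that $\C(\M)=\C(\J(\M))$. The inclusion $\C(\J(\M))\subseteq\C(\M)$ follows from $\J(\M)\subseteq\M$, while $\C(\M)\subseteq\C(\J(\M))$ follows from the description of $\J(\M)$ in Lemma~\ref{l_uem}, which places $P_y\B(\H)\hat{P}_x^\perp$ inside $\J(\M)$ for every $x\otimes y\in\C(\M)$. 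Equality of the rank one operators forces equality of the representing homomorphisms, and therefore $\M=\J(\M)$.

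The main obstacle is precisely this last point: establishing that a weakly closed $\T(\N)$-bimodule is recovered as the weak closure of the span of the rank one operators it contains, equivalently that such a bimodule is determined by those rank one operators. All the other steps are bookkeeping resting on Lemmas~\ref{l_base} and~\ref{l_uem}; I expect the density statement to come from the Erdos--Power representation, perhaps reinforced by an Erdos-type density theorem for the finite rank operators of the bimodule.
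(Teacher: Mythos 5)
Your proof is correct, and it ultimately rests on the same fact the paper uses, but the two arguments are organised differently enough to be worth comparing. The paper's own proof of the forward implication is a single sentence: it invokes Remark~\ref{rem02} to say that $\J(\M)$ is the smallest weakly closed bimodule containing $\M$, whence $\J(\M)=\M$. Read literally, Remark~\ref{rem02} only says that $\J(\M)$ is the smallest weakly closed bimodule containing $\C(\M)$, so the paper is silently using precisely the density statement you isolate as the decisive step, namely that a weakly closed bimodule is recovered as the weak closure of the span of the rank one operators it contains (equivalently, $\M\subseteq\J(\M)$); your write-up makes explicit what the paper leaves implicit. One point to be careful about: the principle that two weakly closed bimodules with the same rank one operators coincide is stated in the paper as Corollary~\ref{c_rankone}, which is \emph{deduced from} Proposition~\ref{p_eqcond}, so you could not cite that corollary here without circularity. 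You correctly avoid this by sourcing the determinacy directly from the Erdos--Power representation (\cite{EP}, Theorem~1.5), which is legitimate since Lemma~\ref{l_base} already rests on that theorem; the verification that $\C(\M)=\C(\J(\M))$ and the inclusion $\J(\M)\subseteq\M$ are both handled correctly. With that external input your argument closes, and the residual dependency you flag at the end is exactly the same one the paper carries.
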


\begin{proof}
It immediately follows from   Lemma \ref{l_uem}  that,  if $
{\M} =\J(\M),
$ 
then $\M$ is a weakly closed bimodule. 
Conversely, suppose  that $\M$ is a weakly closed bimodule. Then, by Remark \ref{rem02},  $\J(\M)$ is the smallest weakly closed bimodule containing $\M$ and, therefore, coincides  with  $\M$ itself.
\end{proof}

\begin{corollary}\label{c_rankone} Let $\M_1$ and $\M_2$ be weakly closed $\T(\N)$-bimodules such that 
$\M_1\cap \F_1(\H)=\M_2\cap \F_1(\H)$. Then $\M_1$ and $\M_2$ coincide.
\end{corollary}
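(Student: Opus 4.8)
The plan is to reduce the whole statement to Proposition \ref{p_eqcond}, by first showing that for a weakly closed bimodule the set $\C(\M)$ of \eqref{03} is precisely the set of rank one operators contained in $\M$. This is the crux, and it is essentially a restatement of Lemma \ref{l_base}.

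First I would compare the defining inclusion in \eqref{03} with the ``if and only if'' of Lemma \ref{l_base}. Taking $\V=\M$ in that lemma, a rank one operator $x\otimes y$ lies in $\M$ exactly when $P_y\B(\H)\hat{P}_x^\perp\subseteq\M$, and the right-hand condition is exactly the membership condition for $\C(\M)$. Hence, for every weakly closed $\T(\N)$-bimodule $\M$, the operators in $\C(\M)$ are precisely the rank one operators of $\M$ (the zero operator aside, which contributes nothing to a span); that is, $\C(\M)$ and $\M\cap\F_1(\H)$ have the same weakly closed span. Note this is genuinely sharper than the inclusion $\C(\M)\subseteq\M$ recorded after \eqref{eq6}: the bimodule hypothesis, via Lemma \ref{l_base}, is what upgrades that inclusion to an exact identification.

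With this in hand the argument is immediate. Applying the identification to $\M_1$ and $\M_2$ and invoking the hypothesis $\M_1\cap\F_1(\H)=\M_2\cap\F_1(\H)$, I obtain $\fecho{\sspan}^w(\C(\M_1))=\fecho{\sspan}^w(\C(\M_2))$, that is, $\J(\M_1)=\J(\M_2)$ by \eqref{eq6}. Since $\M_1$ and $\M_2$ are weakly closed bimodules, Proposition \ref{p_eqcond} yields $\M_1=\J(\M_1)$ and $\M_2=\J(\M_2)$, and therefore $\M_1=\M_2$.

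I do not expect a real obstacle here; the one point deserving care is the precise matching of the defining condition of $\C(\M)$ with the equivalence in Lemma \ref{l_base}, so as to be sure that $\C(\M)$ captures \emph{exactly} the rank one operators of $\M$ and not merely a subset of them. Once that identification is clearly stated, the conclusion follows formally from Proposition \ref{p_eqcond}.
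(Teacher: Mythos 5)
Your argument is correct and is precisely the paper's own proof, which cites exactly Lemma \ref{l_base} and Proposition \ref{p_eqcond} and leaves the identification of $\C(\M)$ with $\M\cap\F_1(\H)$ implicit; you have simply written out that identification explicitly. No gap and no divergence from the paper's route.
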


\begin{proof}
It is an immediate  consequence of  Lemma \ref{l_base} and  Proposition \ref{p_eqcond} 
 that weakly closed bimodules having the same subset of rank one operators must coincide.
\end{proof}

Although the   map $\phi$ in the next lemma be  generally presented,   its definition is in fact rooted   in the investigation of  the structure of weakly closed Lie $\T(\N)$-modules initiated in \cite{AO}.
\begin{lemma}\label{l_phi}Let $\M$ be a weakly closed subspace of $\B(\H)$. Then the map $P\mapsto \phi(P)$   defined on $\N$ by 
\begin{equation}\label{eq11}
\phi(P)=\vee\{Q\in \N \colon Q \B(\H)R^\perp  \subseteq \M, R\in \N, R< P\} 
\end{equation}
is a  left order continuous homomorphism on $\N$. 
Moreover, 
for all  $P\in \N$,  
\begin{equation}\label{eq12}
\phi(P)\B(\H)P^\perp \subseteq \phi(P)\B(\H)P_{-}^\perp\subseteq \M.
\end{equation}
\end{lemma}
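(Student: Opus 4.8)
The plan is to recast the definition of $\phi$ in a more tractable form and then verify the three required properties in turn, reserving the weak closedness of $\M$ for the inclusions in \eqref{eq12}. For each $R\in\N$ set $\psi(R)=\vee\{Q\in\N\colon Q\B(\H)R^\perp\subseteq\M\}$; since a larger $R$ shrinks $\B(\H)R^\perp$, the map $\psi$ is plainly order preserving, and because $\N$ is a complete nest each such supremum again lies in $\N$. With this notation \eqref{eq11} reads $\phi(P)=\vee_{R<P}\psi(R)$, so that $\phi$ too takes values in $\N$, and $\phi(0)=0$.

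Order preservation of $\phi$ is then immediate: if $P\le P'$ then $\{R\in\N\colon R<P\}\subseteq\{R\in\N\colon R<P'\}$, so the indexing set of the supremum only grows and $\phi(P)\le\phi(P')$. For left order continuity I would take an increasing net $(P_\alpha)$ in $\N$ with $\vee_\alpha P_\alpha=P$ and show $\vee_\alpha\phi(P_\alpha)=\phi(P)$. Writing both sides through $\psi$, this reduces to the order-theoretic identity $\{R\in\N\colon R<P_\alpha\text{ for some }\alpha\}=\{R\in\N\colon R<P\}$. The inclusion $\subseteq$ is clear; for $\supseteq$ one uses that $\N$ is totally ordered, so if $R<P=\vee_\alpha P_\alpha$ then $R$ cannot dominate every $P_\alpha$, whence $R<P_\alpha$ for some $\alpha$. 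The only case needing care is a gap $P_-<P$, where any increasing net with supremum $P$ must eventually equal $P$, making the statement trivial. Since $\N$ is a chain, finite meets and joins are preserved automatically by order preservation, so $\phi$ is the asserted left order continuous homomorphism.

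For \eqref{eq12}, the first inclusion is formal: $P_-\le P$ gives $P^\perp\le P_-^\perp$, whence $AP^\perp=AP^\perp P_-^\perp\in\B(\H)P_-^\perp$ and therefore $\phi(P)\B(\H)P^\perp\subseteq\phi(P)\B(\H)P_-^\perp$. The substance is the second inclusion. For each $Q$ in the indexing set $\G=\{Q\in\N\colon Q\B(\H)R^\perp\subseteq\M\text{ for some }R<P\}$ one has $R\le P_-$, hence $P_-^\perp\le R^\perp$ and $QAP_-^\perp=Q(AP_-^\perp)R^\perp\in Q\B(\H)R^\perp\subseteq\M$; thus $Q\B(\H)P_-^\perp\subseteq\M$ for every $Q\in\G$. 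Now $\G$ is a chain in $\N$ with $\vee\G=\phi(P)$, so the net $(Q)_{Q\in\G}$ converges to $\phi(P)$ in the strong, hence weak, operator topology. Fixing $A\in\B(\H)$, the operators $QAP_-^\perp\in\M$ converge weakly to $\phi(P)AP_-^\perp$, and since $\M$ is weakly closed we conclude $\phi(P)AP_-^\perp\in\M$, i.e.\ $\phi(P)\B(\H)P_-^\perp\subseteq\M$, the case $\G=\emptyset$ (giving $\phi(P)=0$) being trivial.

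The main obstacle is the left order continuity: one must pass the supremum defining $\phi$ through the net, and the delicate point is the bookkeeping of the strict inequalities $R<P_\alpha$ versus $R<P$, which is precisely where the completeness and total ordering of $\N$ enter. The limiting argument for \eqref{eq12} is routine by comparison, provided one remembers that it is exactly the weak closedness of $\M$ that licenses the passage from each $Q\in\G$ to the supremum $\phi(P)$.
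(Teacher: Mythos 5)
Your proof is correct and follows essentially the same route as the paper: monotonicity of the indexing sets for order preservation, the total ordering of $\N$ for left order continuity, and strong convergence of an increasing net of projections to $\phi(P)$ combined with separate weak-operator continuity of multiplication and the weak closedness of $\M$ for \eqref{eq12}. The reformulation via $\psi$ and the uniform treatment of the cases $P=P_-$ and $P_-<P$ are minor streamlinings of the paper's case-by-case argument, not a different method.
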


\begin{proof}
If $P_1\leq P_2$, then 
$
\{Q\in \N \colon Q \B(\H)R^\perp  \subseteq \M, R\in \N, R< P_1\}$ is a subset of  $\{Q\in \N \colon Q \B(\H)R^\perp  \subseteq \M, R\in \N, R< P_2\} 
$,
from which immediately follows that $\phi(P_1)\leq \phi(P_2)$. Hence $\phi$  is  an order  homomorphism on $\N$.
Notice that, for all projections $Q\in \N$,
\begin{equation}\label{eq13}
Q\B(\H)P_2^\perp\subseteq Q\B(\H){P_1}^\perp.
\end{equation}
It follows from \eqref{eq13} that, for all $Q,P\in \N$,
\begin{equation}\label{eq17}
 Q\B(\H)P^\perp\subseteq {Q\B(\H)(P}_{-})^\perp.
\end{equation}

Let $(Q_l)$ be a net  in $\{Q\in \N \colon Q \B(\H)R^\perp  \subseteq \M, R\in \N, R< P\} 
$ 
 strongly converging to $\phi(P)$. For all $l$, there exists  a projection $R_l$  such that $R_l<P$ and  
$$
Q_l\B(\H)P^\perp\subseteq  Q_l\B(\H)R_l^\perp \subseteq \M.
$$
Consequently, if $P=P_{-}$, then 
$$
Q_l\B(\H)P^\perp=Q_l\B(\H)P_{-}^\perp \subseteq \M.
$$
Since $\M$ is weakly closed and, hence, strongly closed, by the separate  strong continuity of the multiplication, it now follows that 
$
\phi(P)\B(\H)P^\perp =\phi(P)\B(\H)P_{-}^\perp \subseteq \M.
$

If, on the other hand,  $P_{-}<P$, then, combining  \eqref{eq11} and \eqref{eq13},   
\[
\phi(P)=\vee\{Q\in \N \colon Q \B(\H)P_{-}^\perp  \subseteq \M\}.
\] Consequently,  there exists a net $(Q_j)$ strongly  converging to $\phi(P)$ and such that, for all $j$, 
$Q_j \B(\H)P_{-}^\perp  \subseteq \M$. Observe that it is also the case that $(Q_j)$ weakly  converges to $\phi(P)$. 
It now follows from the 
 separate continuity of the multiplication in the weak operator topology that $\phi(P)\B(\H)P_{-}^\perp\subseteq \fecho{\M}^w=\M$.
 Hence, using \eqref{eq17},
$$
\phi(P)\B(\H)P^\perp \subseteq \phi(P)\B(\H)P_{-}^\perp \subseteq \M,
$$
 which concludes the proof of  \eqref{eq12}.
 
 It only remains to show that  the map $\phi$ is   left order continuous; that is to say that, for every subset $\X$ of $\N$, $\phi(\vee \X)=\vee \phi(\X)$.    This trivially holds for the empty set. Suppose then in what follows that $\X \neq \emptyset$.
 If $\vee \X\in \X$, then the equality $\phi(\vee \X)=\vee \phi(\X)$ is obvious, since $\phi$ is an order-preserving map. If, on the other hand, $\vee \X\notin \X$ then $(\vee \X)_{-}=\vee \X$.

Hence, suppose now that  $P \in\N$ is such that $P_{-}=P$. In this case,  $P=\vee\{R\in \N\colon R<P\}$ and, since $\phi$ is an order homomorphism, it is clear that $\vee\{\phi(R)\in \N\colon R<P\}\leq\phi(P)$.

If $\vee\{\phi(R)\in \N\colon R<P\}<\phi(P)$, then by \eqref{eq11}  there would exist projections $R', Q\in \N$ such that $R'<P$, $\vee\{\phi(R)\in \N\colon R<P\}<Q$ and 
$Q \B(\H)R'^\perp  \subseteq \M$. But, in this case,
$Q\leq \vee\{\phi(R)\in \N\colon R<P\}$ yielding a contradiction.
 It follows that  
$\phi(P)=\vee\{\phi(R)\in \N\colon R<P\}$, as required. 
 Letting  $P=\vee\X$, one finally has $\phi(\vee \X)=\vee \phi(\X)$, which concludes the proof.
 \end{proof}

\begin{lemma}\label{l_eqcond} Let $\M$ be a  subspace of $\B(\H)$ and let $\J(\M)$ be as in \eqref{eq6}. The following assertions are equivalent.
\begin{enumerate}
\item[(i)] $\M$ is a weakly closed $\T(\N)$-bimodule.
\item[(ii)] $
{\M} =\J(\M).
$

\item[(iii)] There exists a  left order continuous homomorphism $\phi\colon \N\to \N$,
 defined  by  
$$
\phi(P)=\vee\{Q\in \N \colon Q \B(\H)R^\perp  \subseteq \M, R\in \N, R< P\},
$$ 
and such that 
\begin{equation}\label{04}
\M=\{T\in \B(\H)\colon \phi(P)^\perp  TP=0\}.
\end{equation}

\item[(iv)] $$
\M=\fecho{\sspan}^w\bigl(\{Q\B(\H) P^\perp  \colon  P,Q\in \N, Q\B(\H) P^\perp \subseteq \M\}\bigr)
.$$
\end{enumerate}
\end{lemma}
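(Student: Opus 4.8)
The plan is to take the equivalence (i)$\Leftrightarrow$(ii) for free, since it is exactly Proposition~\ref{p_eqcond}, and then to close the cycle by establishing the two pairs (i)$\Leftrightarrow$(iv) and (i)$\Leftrightarrow$(iii). The homomorphism $\phi$ needed for (iii), together with the inclusions \eqref{eq12}, is already manufactured by Lemma~\ref{l_phi} for any weakly closed $\M$, so the work for (iii) is confined to verifying the identity \eqref{04}; I expect that to be where the genuine content lies.

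For (iv)$\Rightarrow$(i) I would first observe that each elementary piece $Q\B(\H)P^\perp$ with $P,Q\in\N$ is itself a $\T(\N)$-bimodule: for $S\in\T(\N)$ one has $SQ=QSQ$ and $P^\perp S=P^\perp S P^\perp$, whence $S\,Q\B(\H)P^\perp\subseteq Q\B(\H)P^\perp$ and $Q\B(\H)P^\perp S\subseteq Q\B(\H)P^\perp$. Since a span of bimodules is a bimodule and, by the separate weak continuity of multiplication already exploited in Lemma~\ref{l_uem}, the weak closure of a bimodule is again a bimodule, the right-hand side $\W$ of (iv) is a weakly closed bimodule, which is (i). For the converse (i)$\Rightarrow$(iv) I would invoke (ii) and the description of $\J(\M)$ in Lemma~\ref{l_uem}: every generator $P_y\B(\H)\hat{P}_x^\perp$ of $\J(\M)$ (for $x\otimes y\in\C(\M)$) is of the form $Q\B(\H)P^\perp\subseteq\M$, so $\J(\M)\subseteq\W$, while each generator of $\W$ lies in the weakly closed subspace $\M$, so $\W\subseteq\M$. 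The sandwich $\M=\J(\M)\subseteq\W\subseteq\M$ then forces $\M=\W$.

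For (iii)$\Rightarrow$(i) it suffices to note that $\M_\phi:=\{T\in\B(\H)\colon \phi(P)^\perp TP=0,\ \forall P\in\N\}$ is automatically a weakly closed bimodule: each map $T\mapsto\phi(P)^\perp TP$ is weakly continuous, and the bimodule property follows from $\phi(P)^\perp S=\phi(P)^\perp S\phi(P)^\perp$ and $SP=PSP$ for $S\in\T(\N)$. The substance is (i)$\Rightarrow$(iii), i.e. proving $\M=\M_\phi$, and here I would compare rank-one operators and appeal to Corollary~\ref{c_rankone}. A direct computation gives $\phi(P)^\perp(x\otimes y)P=(Px)\otimes(\phi(P)^\perp y)$, which vanishes for every $P$ precisely when $P_y\le\phi(P)$ for all $P>\hat{P}_x$ (using that, in a nest, $Px=0\Leftrightarrow P\le\hat{P}_x$). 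If $x\otimes y\in\M$, Lemma~\ref{l_base} yields $P_y\B(\H)\hat{P}_x^\perp\subseteq\M$, and then the defining formula \eqref{eq11} (with $Q=P_y$, $R=\hat{P}_x<P$) gives $P_y\le\phi(P)$ for all $P>\hat{P}_x$; hence $x\otimes y\in\M_\phi$.

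The reverse inclusion for rank-one operators is the main obstacle. Given $x\otimes y\in\M_\phi$, so that $P_y\le\phi(P)$ for every $P>\hat{P}_x$, I must recover $P_y\B(\H)\hat{P}_x^\perp\subseteq\M$ and then apply Lemma~\ref{l_base}. The delicate point is producing the projection $\hat{P}_x^\perp$ exactly, and this splits according to the local structure of the nest at $\hat{P}_x$. If $\hat{P}_x$ has an immediate successor $P$, then $P_-=\hat{P}_x$ and $P>\hat{P}_x$ gives $P_y\le\phi(P)$, so \eqref{eq12} yields $P_y\B(\H)\hat{P}_x^\perp\subseteq\phi(P)\B(\H)P_-^\perp\subseteq\M$; it is precisely the appearance of $P_-^\perp$ rather than $P^\perp$ in \eqref{eq12} that absorbs the atom. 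If instead no such successor exists, I would choose a net $P\downarrow\hat{P}_x$, note that $P^\perp\uparrow\hat{P}_x^\perp$ strongly and $P_y\B(\H)P^\perp\subseteq\phi(P)\B(\H)P^\perp\subseteq\M$, and pass to the limit using the separate strong continuity of multiplication and the strong closedness of $\M$. Either way $x\otimes y\in\M$, so $\M\cap\F_1(\H)=\M_\phi\cap\F_1(\H)$, and Corollary~\ref{c_rankone} forces $\M=\M_\phi$, which is \eqref{04}. I expect this gap/atom dichotomy to be the only genuinely subtle step.
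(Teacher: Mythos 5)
Your proposal is correct, and its skeleton coincides with the paper's: (i)$\Leftrightarrow$(ii) is delegated to Proposition~\ref{p_eqcond}, the sets in (iii) and (iv) are checked directly to be weakly closed bimodules, and the substantive implication (i)$\Rightarrow$(iii) is settled by comparing rank-one operators and invoking Corollary~\ref{c_rankone}, with the left order continuity of $\phi$ coming from Lemma~\ref{l_phi}. You deviate at two points, both to the good. For (i)$\Rightarrow$(iv) you prove the inclusion $\W\subseteq\M$ directly (every generator $Q\B(\H)P^\perp$ of $\W$ lies in the weakly closed subspace $\M$ by hypothesis), whereas the paper argues by contradiction through Corollary~\ref{c_rankone}; your version is shorter and loses nothing. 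More significantly, in the rank-one comparison for (i)$\Rightarrow$(iii), write $\M_\phi:=\{T\in\B(\H)\colon \phi(P)^\perp TP=0\}$; the paper's converse step derives from $x\otimes y\in\M_\phi$ only that $P_y\le\phi(P)$ for $P>\hat{P}_x$ and hence that $P_y\B(\H)\hat{P}_x^\perp\subseteq\M_\phi$, and then appeals to Lemma~\ref{l_base} to conclude $x\otimes y\in\C(\M)$ --- but membership in $\C(\M)$ requires $P_y\B(\H)\hat{P}_x^\perp\subseteq\M$, not merely $\subseteq\M_\phi$, and that passage is not spelled out in the paper. You supply exactly the missing ingredient: the inclusion \eqref{eq12} of Lemma~\ref{l_phi}, applied either to an immediate successor $P$ of $\hat{P}_x$ (where the $P_-^\perp$ in \eqref{eq12} absorbs the atom) or, in the absence of one, along a net $P\downarrow\hat{P}_x$ followed by a strong-limit argument in the strongly closed $\M$. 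That gap/atom dichotomy is indeed the only delicate point of the whole lemma, and your treatment makes the proof self-contained where the paper's is elliptical; the only cosmetic omission is the trivial case $\hat{P}_x=I$, where $x=0$ and there is nothing to prove.
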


\begin{remark}\label{rem03} 
 It is  clear  that  a  set defined by any (not necessarily  order-preserving) map $\phi$ as in  \eqref{04} is  a weakly closed $\T(\N)$-bimodule. It has already been shown in  \cite{EP} that, for each weakly closed $\T(\N)$-bimodule $\M$, there exists a (not necessarily unique)  left order continuous map $P\mapsto \tilde{P}$  on $\N$  describing $\M$ in the sense of \eqref{04}. However,  the map $P\mapsto \tilde{P}$  and the map $\phi$ above are differently defined and, whilst the  proof of the existence of the map $P\mapsto \tilde{P}$ relies on the decomposability of the finite rank operators,  the proof below is  free of that assumption.
\end{remark}

\begin{proof} 
The equivalence (i)  $\Leftrightarrow$ (ii) has already been proved in Proposition \ref{p_eqcond}. 

As observed in Remark \ref{rem03}, the  set defined by  \eqref{04} is necessarily a weakly closed $\T(\N)$-bimodule. Hence (iii) $\Rightarrow$ (i)  $\Rightarrow$ (ii). 

To  show that (ii)  $\Rightarrow$ (iii), assume now that $\M=\J(\M)$.  It will be shown  firstly that   $
\C(\M)$ coincides with the subset of rank one operators contained in $\{T\in \B(\H)\colon \phi(P)^\perp  TP=0\}$.
  Let $x\otimes y$ be a rank one operator in $\C(\M)$, let $P$ be a projection in $\N$ and suppose initially  that $\hat{P}_x \leq P$. It follows, by Lemma \ref{l_base} and the definition of $\phi$, that $P_y\leq \phi(P)$ and, consequently, 
$$
\phi(P)^\perp (x\otimes y)P=\phi(P)^\perp  P_y(x\otimes y)\hat{P}_x^\perp  P=0.
$$
It can  be similarly shown  that $\phi(P)^\perp (x\otimes y)P=0$, when  $P < \hat{P}_x $. Hence 
$
\C(\M)\subseteq \{T\in \B(\H)\colon \phi(P)^\perp  TP=0\}
$.

Conversely, let  $x\otimes y$ be an operator lying in the weakly closed bimodule 
$$
\{T\in \B(\H)\colon \phi(P)^\perp  TP=0\}.
$$
 It will be
shown that $x\otimes y$ lies in $\C(\M)$. By Lemma \ref{l_base}, this is equivalent to proving that, for all $T\in P_y\B(\H)\hat{P}_x^\perp$ and  all $P\in \N$, 
$
\phi(P)^\perp TP=0
$.
 Suppose then that $T$ is an operator in $P_y\B(\H)\hat{P}_x^\perp$. If $P\leq \hat{P}_x$, then
  $$
  \phi(P)^\perp  TP=\phi(P)^\perp P_yT\hat{P}_x^\perp P=0.
  $$
On the other hand, if   $\hat{P}_x< P$ then, since $x\otimes y$ lies in $\{T\in \B(\H)\colon \phi(P)^\perp  TP=0\}$,  it  immediately follows from the definition of $\phi$  that $P_y\leq \phi(P)$.  Hence 
$$
  \phi(P)^\perp  TP=\phi(P)^\perp P_yT\hat{P}_x^\perp P=0.
  $$

It has been shown that a rank one operator lies in the weakly closed bimodule $\{T\in \B(\H)\colon \phi(P)^\perp  TP=0\}$ if and only if it lies in $\C(\M)$ or, in other words, if the rank one operators in both sets are exactly the same.  
By Corollary \ref{c_rankone},  weakly closed bimodules having the same subset of rank one operators must coincide. Hence  $\J(\M)=\{T\in \B(\H)\colon \phi(P)^\perp  TP=0\}$, as required. The left order continuity of $\phi$ is a consequence of Lemma \ref{l_phi}.

 Assume that $\M$ is the  weakly closed subspace of $\B(\H)$ defined in (iv).  Observe that, for every $P, Q\in \N$, the subspace $Q\B(H) P^\perp$ is a $\T(\N)$-bimodule. Hence $\sspan\bigl(\{Q\B(H) P^\perp  \colon  P,Q\in \N, Q\B(H) P^\perp \subseteq \M\}\bigr)$ is also a $\T(\N)$-bimodule, and from this  immediately follows that the closure of this set in the weak operator topology is a  weakly closed $\T(\N)$-bimodule. Hence (iv)  $\Rightarrow$ (i). 
 
 On the other hand, by Proposition \ref{p_eqcond}, if (i) holds, then  $\M=\J(\M)$. By Lemma \ref{l_uem}, it follows that $
 \M= \fecho{\sspan}^w\bigl(\bigcup_{ x\otimes y\in \C(\M)}  P_y\B(\H) \hat{P}_x^\perp \bigr)$. Hence, by Lemma \ref{l_base}, $\M$ is contained in the weakly closed $\T(\N)$-bimodule 
 $$
 \fecho{\sspan}^w\bigl(\{Q\B(H) P^\perp  \colon  P,Q\in \N, Q\B(H) P^\perp \subseteq \M\}\bigr).
 $$
If $\M$ were properly contained in this set, then, by Corollary \ref{c_rankone}, there would  exist a rank one operator $x\otimes y=P_y(x\otimes y) \hat{P}_x$ lying in 
 $
  \fecho{\sspan}^w\bigl(\{Q\B(H) P^\perp  \colon  P,Q\in \N, Q\B(H) P^\perp \subseteq \M\}\bigr)
 $
 but not in $\M$. Consequently, since this latter subspace is a weakly closed bimodule, by Lemma \ref{l_base}, 
 $P_y\B(H) \hat{P}_x\subseteq \fecho{\sspan}^w\bigl(\{Q\B(H) P^\perp  \colon  P,Q\in \N, Q\B(H) P^\perp \subseteq \M\}\bigr)\subseteq \M$.
  Hence $x\otimes y$ would lie in $\M$, which contradicts the assumption.
\end{proof}

\begin{corollary} \label{c_largebim}
Let $\M$ be a weakly closed subspace of $\B(\H)$ and let $\J(\M)$ be as in \eqref{eq6}. 
Then   $\J(\M)$  is the largest  weakly closed  $\T(\N)$-bimodule contained in $\M$.
\end{corollary}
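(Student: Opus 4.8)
The plan is to verify two things: that $\J(\M)$ is a weakly closed $\T(\N)$-bimodule contained in $\M$, and that it contains every weakly closed $\T(\N)$-bimodule which is itself contained in $\M$. The first half is almost entirely supplied by the earlier results, so the real content of the statement lies in the maximality; I expect this to follow from a simple monotonicity property of the assignment $\M\mapsto\C(\M)$ together with the fixed-point characterisation of bimodules already established.

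First I would record that $\J(\M)$ is a weakly closed bimodule by Lemma \ref{l_uem}, and that it sits inside $\M$. For the latter, recall the observation made immediately after the definition \eqref{eq6} that $\sspan(\C(\M))\subseteq\M$; since $\M$ is weakly closed by hypothesis, passing to the weak-operator closure preserves this inclusion, giving $\J(\M)=\fecho{\sspan}^w(\C(\M))\subseteq\M$. Thus $\J(\M)$ is indeed a weakly closed $\T(\N)$-bimodule contained in $\M$.

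For maximality, let $\V$ be an arbitrary weakly closed $\T(\N)$-bimodule with $\V\subseteq\M$; the goal is $\V\subseteq\J(\M)$. The key step is to note that $\C$ is monotone with respect to inclusion: if $x\otimes y\in\C(\V)$, then $P_y\B(\H)\hat{P}_x^\perp\subseteq\V\subseteq\M$ by the definition \eqref{03}, whence $x\otimes y\in\C(\M)$. Therefore $\C(\V)\subseteq\C(\M)$, and applying $\fecho{\sspan}^w$ to both sides yields $\J(\V)\subseteq\J(\M)$. Finally, because $\V$ is a weakly closed bimodule, Proposition \ref{p_eqcond} gives $\V=\J(\V)$, so that $\V=\J(\V)\subseteq\J(\M)$, as desired.

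I do not anticipate a genuine obstacle here: the whole argument rests on the fixed-point identity $\V=\J(\V)$ for weakly closed bimodules from Proposition \ref{p_eqcond} together with the evident monotonicity of $\C$, both of which are already in hand. If anything, the only point that needs a moment's care is the containment $\J(\M)\subseteq\M$, where one must explicitly invoke the hypothesis that $\M$ is weakly closed, so that the weak closure of $\sspan(\C(\M))$ cannot escape $\M$.
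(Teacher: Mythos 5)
Your proof is correct and follows essentially the same route as the paper: both halves rest on the containment $\sspan(\C(\M))\subseteq\M$ (plus weak closedness of $\M$) and on the fixed-point identity $\V=\J(\V)$ from Proposition \ref{p_eqcond}. The only difference is cosmetic: where you deduce $\J(\V)\subseteq\J(\M)$ directly from the monotonicity of $\M\mapsto\C(\M)$, the paper argues by contradiction, using Corollary \ref{c_rankone} to extract a rank-one operator of $\V$ lying outside $\J(\M)$ and then noting it must belong to $\C(\M)$; your direct version is, if anything, slightly cleaner.
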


\begin{proof}  
It is clear that,  if  $\M$ is weakly closed, then  $\J(\M)=\fecho{\sspan}^w(\C(\M)) \subseteq   \M$. 

Suppose that there exists a weakly closed bimodule $\V$ contained in $\M$ which properly contains $\V$. Then, by Lemma \ref{l_eqcond}, $\V=\J(\V)$ and, thus, $\J(\V)= \fecho{\sspan}^w(\C(\V))\supsetneq \J(\M)$. Hence,  by Corollary \ref{c_rankone},  there exists a rank one operator $x\otimes y\in \V\backslash \J(\M)$ such that 
\begin{equation}\label{06}
P_y\B(\H) \hat{P}_x^\perp  \subseteq \M. 
\end{equation}
But this is impossible since, by the definition of $\J(\M)$, this bimodule contains all  operators $x\otimes y$ satisfying \eqref{06}. Hence $\J(\M)$ is the largest weak operator closed bimodule contained in $\M$.
\end{proof}

%**********************************************************************************************************************************
\section{Lie modules}\label{liemod}
%**********************************************************************************************************************************
 
The following  theorem  summarises the results of the previous section when $\M$ is a Lie $\T(\N)$-module.

\begin{theorem} \label{t_largebim} Let $\Ll$ be a weakly closed Lie $\T(\N)$-module of $\B(\H)$ and let 
$$
\C(\Ll) =\{x\otimes y\in \B(\H)\colon P_y\B(\H) \hat{P}_x^\perp  \subseteq \Ll\}.
$$
 Then, the largest   weakly closed $\T(\N)$-bimodule contained in $\Ll$ is $\J(\Ll) =\fecho{\sspan}^w (\C(\Ll))$. Moreover,
$$\J(\Ll) = \fecho{\sspan}^w\bigl(\bigcup_{ x\otimes y\in \C(\Ll)}  P_y\B(\H) \hat{P}_x^\perp \bigr)=\fecho{\sspan}^w\bigl(\{Q\B(\H) P^\perp  \colon  P,Q\in \N, Q\B(\H) P^\perp \subseteq \Ll\}\bigr).$$
\end{theorem}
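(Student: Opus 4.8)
The plan is to observe that the Lie-module hypothesis is not actually needed beyond the fact that $\Ll$ is a weakly closed subspace of $\B(\H)$; once this is noted, the theorem is an immediate consequence of the machinery built in Section \ref{bimod}. First I would record that a Lie $\T(\N)$-module is by definition a complex subspace, and here it is assumed weakly closed, so $\Ll$ meets the hypotheses of Corollary \ref{c_largebim}. Applying that corollary directly yields that $\J(\Ll)=\fecho{\sspan}^w(\C(\Ll))$ is the largest weakly closed $\T(\N)$-bimodule contained in $\Ll$, which is precisely the first assertion of the theorem.

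For the displayed chain of equalities, the first equality
\[
\J(\Ll)=\fecho{\sspan}^w\bigl(\bigcup_{x\otimes y\in\C(\Ll)} P_y\B(\H)\hat{P}_x^\perp\bigr)
\]
is exactly the content of Lemma \ref{l_uem} applied with $\M=\Ll$, so nothing further is required there.

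The only point meriting a short argument is the second equality. Since $\J(\Ll)$ is itself a weakly closed $\T(\N)$-bimodule, Lemma \ref{l_eqcond}, part (iv), applied with $\M=\J(\Ll)$ gives
\[
\J(\Ll)=\fecho{\sspan}^w\bigl(\{Q\B(\H)P^\perp : P,Q\in\N,\ Q\B(\H)P^\perp\subseteq\J(\Ll)\}\bigr),
\]
so it remains to replace the defining condition $Q\B(\H)P^\perp\subseteq\J(\Ll)$ by $Q\B(\H)P^\perp\subseteq\Ll$. I would do this by showing the two index sets coincide. For fixed $P,Q\in\N$ the subspace $Q\B(\H)P^\perp$ is a weakly closed $\T(\N)$-bimodule; since $\J(\Ll)\subseteq\Ll$, the inclusion $Q\B(\H)P^\perp\subseteq\J(\Ll)$ trivially forces $Q\B(\H)P^\perp\subseteq\Ll$, and conversely, if $Q\B(\H)P^\perp\subseteq\Ll$ then, being a weakly closed bimodule contained in $\Ll$, it must lie in the largest such bimodule, namely $\J(\Ll)$. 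Thus the two conditions are equivalent, the index sets agree, and the second equality follows.

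I expect no serious obstacle: the substance of the theorem has already been packaged into Corollary \ref{c_largebim}, Lemma \ref{l_uem} and Lemma \ref{l_eqcond}, and the role of this theorem is essentially to restate those results in the setting where $\Ll$ happens to be a Lie module. The one place requiring a little care is the equivalence of the two inclusion conditions in the final equality, where the maximality of $\J(\Ll)$ among weakly closed bimodules contained in $\Ll$ must be invoked together with the fact that each $Q\B(\H)P^\perp$ is a weakly closed $\T(\N)$-bimodule.
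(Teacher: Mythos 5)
Your proposal is correct and follows essentially the same route as the paper, which presents Theorem \ref{t_largebim} without a separate proof, explicitly describing it as a summary of the Section \ref{bimod} results (Corollary \ref{c_largebim}, Lemma \ref{l_uem}, Lemma \ref{l_eqcond}) applied to the weakly closed subspace $\Ll$. Your added argument for the final equality — using that each $Q\B(\H)P^\perp$ is a weakly closed $\T(\N)$-bimodule together with the maximality of $\J(\Ll)$ to identify the two index sets — is a valid filling-in of a detail the paper leaves implicit.
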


\begin{lemma}\label{l_aux2}
Let $\Ll$ be a Lie $\T(\N)$-module and let $P,Q\in \T(\N)$ 
be mutually orthogonal projections. 
 Then, for all $T\in \Ll$, the operators $PTQ, QTP$ lie in $\Ll$.
\end{lemma}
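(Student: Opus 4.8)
The plan is to extract the corner operators $PTQ$ and $QTP$ purely through iterated commutators with $P$ and $Q$, exploiting that both projections lie in $\T(\N)$ and that $\Ll$ is closed under the adjoint action $[\,\cdot\,,\T(\N)]$. Since $P,Q\in\T(\N)$, for any $S\in\Ll$ both $[S,P]$ and $[S,Q]$ again belong to $\Ll$; hence every iterated commutator of $T\in\Ll$ with $P$ and $Q$ stays inside $\Ll$, and this is the only structural input I will use.

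First I would form the double commutator $[[T,P],Q]$. Expanding $[T,P]=TP-PT$ and then commuting with $Q$ produces four terms, two of which, namely $TPQ$ and $QPT$, vanish because $PQ=QP=0$. What survives is
\[
[[T,P],Q]=-(PTQ+QTP),
\]
so the symmetric corner sum $PTQ+QTP$ lies in $\Ll$. The mutual orthogonality of $P$ and $Q$ is exactly what annihilates the unwanted cross terms here, and this is the crux of the bookkeeping.

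Next I would commute this surviving element with $P$ once more. Writing $U=PTQ+QTP\in\Ll$, a direct expansion of $[U,P]=UP-PU$, again using $PQ=QP=0$ and $P^2=P$, collapses to
\[
[U,P]=QTP-PTQ,
\]
which therefore also lies in $\Ll$. At this point $\Ll$ contains both $PTQ+QTP$ and $QTP-PTQ$, and since $\Ll$ is a complex subspace I can take half their difference and half their sum to conclude
\[
PTQ=\tfrac12\bigl(U-[U,P]\bigr)\in\Ll,\qquad QTP=\tfrac12\bigl(U+[U,P]\bigr)\in\Ll.
\]

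I do not expect any real obstacle: the argument is a short Peirce-type computation in which the only point to watch is that $P$ and $Q$ may be acted upon solely through commutators rather than by direct multiplication, so the two commutators must be arranged so that orthogonality kills precisely the diagonal and wrong-corner contributions. No use of the nest structure beyond $P,Q\in\T(\N)$, and no decomposability or weak-closure hypotheses, are needed.
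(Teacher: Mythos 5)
Your argument is correct and is essentially the paper's: both hinge on the double commutator $[[T,P],Q]=-(PTQ+QTP)$ followed by one further commutator (you use $P$, the paper uses $Q$) to separate the two corners by linear combination. The paper simply packages this as the single identity $QTP=\tfrac12\bigl([[[T,P],Q],Q]-[[T,P],Q]\bigr)$.
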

\begin{proof}
Since $PQ=0$, it is easily seen that  $QTP=\frac{1}{2}([[[T,P],Q],Q]-[[T,P],Q])$, from which  follows that $QTP\in \Ll$. The remaining assertion can be similarly proved.
\end{proof}

\begin{lemma}\label{l_aux3} Let  $\Ll$ be a weakly closed Lie $\T(\N)$-module and let $P$ be a projection in $\N$. If  $P^\perp  \Ll P\neq \{0\}$, then $P\Ll P^\perp =P\B(\H)P^\perp $.
\end{lemma}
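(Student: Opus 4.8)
The plan is to prove the nontrivial inclusion $P\B(\H)P^\perp\subseteq P\Ll P^\perp$, the reverse being trivial. I would begin with two structural observations about $P\in\N$: both $P$ and $P^\perp$ lie in $\T(\N)$, and $P\B(\H)P^\perp\subseteq\T(\N)$. The latter holds because, for any $Q\in\N$, total ordering forces either $Q\le P$, whence $P^\perp Q=0$, or $P\le Q$, whence $Q^\perp P=0$; in both cases $Q^\perp(PBP^\perp)Q=0$. Consequently Lemma~\ref{l_aux2} applies to the mutually orthogonal pair $P,P^\perp\in\T(\N)$, giving $PTP^\perp,\,P^\perp TP\in\Ll$ for every $T\in\Ll$. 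In particular $P\Ll P^\perp=\Ll\cap P\B(\H)P^\perp$, which is weakly closed, and the hypothesis $P^\perp\Ll P\ne\{0\}$ furnishes some $T\in\Ll$ with $A:=P^\perp TP\in\Ll$ and $A\ne0$. Since $P\Ll P^\perp$ is weakly closed and $P\B(\H)P^\perp$ is the weak closure of the span of the rank one operators $x\otimes y$ with $x\in P^\perp\H$ and $y\in P\H$, it suffices to place every such rank one operator in $P\Ll P^\perp$.

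The key device is a double Lie bracket. For $X,Y\in P\B(\H)P^\perp\subseteq\T(\N)$, the module property yields $[[A,X],Y]\in\Ll$. Decomposing all operators along $\H=P\H\oplus P^\perp\H$, the witness $A$ occupies the $P^\perp(\cdot)P$ corner while $X,Y$ occupy the $P(\cdot)P^\perp$ corner. A direct block computation then shows that $[A,X]$ is \emph{purely diagonal}, namely $-XA$ on $P\H$ and $AX$ on $P^\perp\H$, and that a second bracket returns to the $P(\cdot)P^\perp$ corner with $[[A,X],Y]=-(XAY+YAX)$. As this operator already has the form $P(\cdot)P^\perp$, it lies in $P\Ll P^\perp$; thus $XAY+YAX\in P\Ll P^\perp$ for all $X,Y\in P\B(\H)P^\perp$.

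I would then specialise to rank one $X=x_1\otimes y_1$ and $Y=x_2\otimes y_2$ with $x_i\in P^\perp\H$ and $y_i\in P\H$, for which $XAY=\langle Ay_2,x_1\rangle\,(x_2\otimes y_1)$, so that
\[
\langle Ay_2,x_1\rangle\,(x_2\otimes y_1)+\langle Ay_1,x_2\rangle\,(x_1\otimes y_2)\in P\Ll P^\perp .
\]
Since $A\ne0$, fix $\xi\in P\H$ and $\eta\in P^\perp\H$ with $\langle A\xi,\eta\rangle=1$. Taking $x_1=x_2=\eta$ and $y_1=y_2=\xi$ gives $2\,(\eta\otimes\xi)\in P\Ll P^\perp$, hence $\eta\otimes\xi\in P\Ll P^\perp$; and for an arbitrary target $x\otimes y$ with $x\in P^\perp\H,\ y\in P\H$, taking $x_1=\eta,\ y_1=y,\ x_2=x,\ y_2=\xi$ gives $x\otimes y+\langle Ay,x\rangle\,(\eta\otimes\xi)\in P\Ll P^\perp$. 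Subtracting the already-obtained multiple of $\eta\otimes\xi$ shows $x\otimes y\in P\Ll P^\perp$. Passing to weakly closed spans then yields $P\B(\H)P^\perp\subseteq P\Ll P^\perp$ and hence equality.

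The main obstacle is conceptual rather than computational: every element of $\T(\N)$ is block upper triangular relative to $P$, so a single bracket with $\T(\N)$ cannot transport the ``lower triangular'' witness $A$ into the ``upper triangular'' corner, and indeed $[A,\T(\N)]$ remains in the lower-plus-diagonal part. What rescues the argument is to pass through the diagonal with a first bracket and only then return to the $P(\cdot)P^\perp$ corner with a second bracket. The one residual subtlety is that the double bracket produces the symmetric combination $XAY+YAX$ rather than a single product $XAY$; the two-step choice of vectors above is precisely what disentangles a single rank one operator from this symmetrised form.
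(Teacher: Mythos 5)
Your proof is correct, and it runs on the same engine as the paper's: a double Lie bracket that carries the witness $A=P^\perp TP\in\Ll$ through the block diagonal and back into the corner $P\B(\H)P^\perp$, evaluated on rank one operators (your preliminary checks that $P\B(\H)P^\perp\subseteq\T(\N)$ and that $P\Ll P^\perp=\Ll\cap P\B(\H)P^\perp$ is weakly closed are both sound). The difference lies in how the bracket is set up. The paper brackets twice with the \emph{same} rank one $X=P(x\otimes y)P^\perp$, obtaining $[[X,A],X]=2XAX=2\langle P^\perp TPy,x\rangle\,P(x\otimes y)P^\perp$, and must then dispose of the degenerate cases $\langle P^\perp TPy,x\rangle=0$ and $P^\perp TPy=0$ by two further substitution-and-subtraction arguments. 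Your polarised identity $[[A,X],Y]=-(XAY+YAX)$, with two independent rank ones and a single pair $(\xi,\eta)$ normalised so that $\langle A\xi,\eta\rangle=1$, absorbs all of that case analysis into one step. What it costs is having to disentangle a single rank one from the symmetrised term $XAY+YAX$, which your two-stage choice of vectors (first $x_1=x_2=\eta$, $y_1=y_2=\xi$ to capture $\eta\otimes\xi$, then the mixed choice followed by a subtraction) handles correctly; what it buys is a uniform argument with no branching on the vanishing of inner products.
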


\begin{proof}
Let $P\in \N$ and $T\in\Ll$ be such that   $P^\perp  TP\neq 0$. Notice that Lemma \ref{l_aux2} guarantees that $P^\perp TP\in \Ll$. 
 To prove the assertion, it suffices to show that,  for all $x,y\in \H$,  the operator  $P(x\otimes y)P^\perp$  lies in $\Ll$. This trivially holds when $P(x\otimes y)P^\perp=0$. Assume now that $P(x\otimes y)P^\perp$ is  a rank one operator. Then 
\begin{equation}\label{eq5}
[[P(x\otimes y)P^\perp ,P^\perp  TP],P(x\otimes y)P^\perp ]=2P(x\otimes y)P^\perp  TP(x\otimes y)P^\perp
\end{equation}
and, therefore,
\begin{equation}\label{eq3}
[[P(x\otimes y)P^\perp ,P^\perp  TP],P(x\otimes y)P^\perp ]=2\langle P^\perp  TPy, x\rangle P (x\otimes y)P^\perp  
\end{equation}
lies in $\Ll$.
It  follows that $P(x\otimes y)P^\perp \in \Ll$, whenever   $\langle P^\perp  TPy, x\rangle\neq 0$. 

On the other hand, if $x\perp P^\perp  TPy$, then suppose firstly that $P^\perp TPy\neq 0$. 
 In this case, replacing $x\otimes y$ by $P^\perp  TPy \otimes Py$ in the above computations yields that  the operator $P^\perp  TPy \otimes Py$ lies in $\Ll$.
  Notice that the condition under which it  can be deduced from \eqref{eq3} that $P^\perp  TPy \otimes Py\in \Ll$ is, in this case, that $\langle P^\perp  TPy,P^\perp  TPy\rangle\neq 0$, which clearly holds. Moreover, since $\langle P^\perp  TPy-x,P^\perp  TPy\rangle\neq 0$, it also follows from \eqref{eq3} that $(P^\perp  TPy-P^\perp x)\otimes Py$ lies in $\Ll$. Hence, 
$$
P(x\otimes y)P^\perp=P^\perp  TPy\otimes Py - (P^\perp T Py- P^\perp x)\otimes P y
$$
 lies in $\Ll$.

Assume now that   $P^\perp TPy=0$.
 Since $P^\perp TP\neq 0$, there exists $z\in\H$ such that $P^\perp TPz\neq 0$, from which follows that 
 $P^\perp  TP(z-y)\neq 0$. Applying a reasoning similar to that of the preceding paragraph, it follows that both $ P(x\otimes z)P^\perp$ and $P(x\otimes (z-y))P^\perp  $ lie in $\Ll$. Hence, $P(x\otimes y) P^\perp=P(x\otimes z)P^\perp- P(x\otimes(z-y))P^\perp$ lies in $\Ll$, which concludes the proof.
\end{proof}

Let $\Ll$ be a Lie $\T(\N)$-module and let 
$\K(\Ll)$ be the subspace of $\B(\H)$ defined by 
\begin{equation}\label{05}
\K(\Ll)=\K_V(\Ll)+\K_L(\Ll)+\K_D(\Ll)+\K_\Delta(\Ll),
\end{equation}
 where
\begin{align}
\label{09}\K_V(\Ll)&=\overline{\spn}^w\{PTP^\perp \colon P\in \N,T\in\Ll\},\\
\label{10}\K_L(\Ll)&=\overline{\spn}^w\{P^\perp TP\colon P\in \N,T\in\Ll\},\\
\label{11}\K_D(\Ll)&=\overline{\spn}^w\{PSP^\perp TP\colon P\in \N,T\in\Ll,S\in \T(\N)\},\\
\label{12}\K_\Delta(\Ll)&=\overline{\spn}^w\{P^\perp TPSP^\perp \colon P\in \N,T\in \Ll,S\in \T(\N)\}.
\end{align}

\begin{lemma}\label{l_bimk}
Let $\Ll$ be a weakly closed
 Lie $\T(\N)$-module and let $\K(\Ll)$ and $\K_V(\Ll)$ be as in \eqref{05} and \eqref{09}, respectively. Then,     $\K(\Ll)$   is a weakly closed $\T(\N)$-bimodule and $\K_V(\Ll)$ is a weakly closed  ideal of $\T(\N)$  such that   $\K_V(\Ll)\subseteq \J(\Ll)$. 
\end{lemma}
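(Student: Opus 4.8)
The plan is to dispatch $\K_V(\Ll)$ first and then handle the full sum $\K(\Ll)$. The basic observation throughout is that, for every $P\in\N$, both $P$ and $P^\perp$ belong to the diagonal $\D(\N)\subseteq\T(\N)$ and are mutually orthogonal, so Lemma~\ref{l_aux2} applies and yields $PTP^\perp,\,P^\perp TP\in\Ll$ for all $T\in\Ll$. In particular the generators of $\K_V(\Ll)$ and of $\K_L(\Ll)$ lie in the weakly closed space $\Ll$, whence $\K_V(\Ll),\K_L(\Ll)\subseteq\Ll$. A short triangularity check — using that $R^\perp P=0$ when $P\le R$ and $P^\perp R=0$ when $R\le P$ — shows moreover that each $PTP^\perp$ lies in $\T(\N)$, so $\K_V(\Ll)\subseteq\T(\N)$.

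I would then prove that $\K_V(\Ll)$ is a bimodule by means of the two identities, valid for $S\in\T(\N)$, $T\in\Ll$ and $P\in\N$,
$$S\,(PTP^\perp)=P\,[SP,T]\,P^\perp,\qquad (PTP^\perp)\,S=P\,[T,P^\perp S]\,P^\perp,$$
which follow on expanding the commutators and using $SP=PSP$ and $P^\perp S=P^\perp S P^\perp$ (both consequences of $P^\perp SP=0$) together with $PP^\perp=0$. Since $SP,\,P^\perp S\in\T(\N)$ and $\Ll$ is a Lie module, the commutators $[SP,T]$ and $[T,P^\perp S]$ again lie in $\Ll$, so the right-hand sides are generators of $\K_V(\Ll)$. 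Thus the span of the generators is closed under left and right multiplication by $\T(\N)$, and its weak closure $\K_V(\Ll)$ is a weakly closed $\T(\N)$-bimodule contained in $\T(\N)$, that is, a weakly closed ideal. Being a weakly closed bimodule contained in $\Ll$, it is contained in the largest such, so $\K_V(\Ll)\subseteq\J(\Ll)$ by Corollary~\ref{c_largebim}.

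For the bimodule property of $\K(\Ll)$ the organising principle is the $2\times2$ decomposition relative to $\{P,P^\perp\}$: the generators of $\K_V$, $\K_L$, $\K_D$, $\K_\Delta$ occupy respectively the corners $P\B(\H)P^\perp$, $P^\perp\B(\H)P$, $P\B(\H)P$ and $P^\perp\B(\H)P^\perp$. I would multiply a generator of each of the four types on the left and on the right by an arbitrary $S\in\T(\N)$, split $S=PSP+PSP^\perp+P^\perp SP^\perp$ (recall $P^\perp SP=0$), and rewrite each resulting corner block as a generator of one of the four families using the same commutator device as above; a representative computation is $S\,(P^\perp TP)=PSP^\perp TP+P^\perp SP^\perp TP$, whose first summand is a $\K_D$-generator and whose second equals $P^\perp[P^\perp SP^\perp,T]P$, a $\K_L$-generator. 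Carrying out all eight products shows that left and right multiplication by $\T(\N)$ carry the union of the four generating sets into $\K_V(\Ll)+\K_L(\Ll)+\K_D(\Ll)+\K_\Delta(\Ll)$; hence the span of all generators, and therefore the sum $\K(\Ll)$, is a $\T(\N)$-bimodule in the algebraic sense.

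The point I expect to be the main obstacle is weak closedness, because \eqref{05} exhibits $\K(\Ll)$ as the \emph{algebraic} sum of four weakly closed subspaces and such a sum is in general strictly smaller than its weak closure. The weak closure $\fecho{\K(\Ll)}^w$ is a weakly closed bimodule by the previous paragraph, so the task is to show that nothing new is added, namely $\fecho{\K(\Ll)}^w\subseteq\K(\Ll)$. My plan is to exploit that a weakly closed bimodule is determined by its rank one operators (Corollary~\ref{c_rankone} and Lemma~\ref{l_base}): I would show that every block $Q\B(\H)P^\perp$ contained in $\fecho{\K(\Ll)}^w$ is already contained in the algebraic sum $\K(\Ll)$, by tracking, through the corner decomposition, which summand absorbs each such block, and then argue that $\K(\Ll)$ already realises the form of Lemma~\ref{l_eqcond}(iv). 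The delicate bookkeeping — that the generating blocks of $\fecho{\K(\Ll)}^w$ do not escape the finite algebraic sum even though the four summands are not localised in a single corner — is the step requiring the most care, and is where I would concentrate the effort.
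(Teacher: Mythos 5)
Your treatment of $\K_V(\Ll)$ is sound and essentially the paper's: the identities $S(PTP^\perp)=P[SP,T]P^\perp$ and $(PTP^\perp)S=P[T,P^\perp S]P^\perp$ are correct and do the same work as the commutators $[PSP,PTP^\perp]$ and $[PTP^\perp,P^\perp SP^\perp]$ used in the paper, and the route to $\K_V(\Ll)\subseteq\J(\Ll)$ via Corollary~\ref{c_largebim} is exactly right.

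The genuine gap is in the claim that all eight corner products reduce to generators \emph{``using the same commutator device''}. This fails for precisely two of them: right multiplication of a $\K_D$-generator, which produces the cross term $PSP^\perp TP\cdot PRP^\perp=PSP^\perp TPRP^\perp$, and left multiplication of a $\K_\Delta$-generator, which produces $PRP^\perp\cdot P^\perp TPSP^\perp=PRP^\perp TPSP^\perp$. Both land in the corner $P\B(\H)P^\perp$, but no commutator identity exhibits them as $PT'P^\perp$ with $T'\in\Ll$: the operator sandwiched between $P$ and $P^\perp$ is $SP^\perp TPR$ (respectively $RP^\perp TPS$), and there is no reason for it to lie in $\Ll$. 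The paper's way out is the dichotomy of Lemma~\ref{l_aux3}: either $P^\perp\Ll P=\{0\}$, in which case the $\K_D$- and $\K_\Delta$-generators at $P$ vanish and there is nothing to prove, or $P\Ll P^\perp=P\B(\H)P^\perp$, in which case Lemma~\ref{l_aux2} gives $P\B(\H)P^\perp\subseteq\Ll$ and the two cross terms are $\K_V$-generators outright. Your proposal never invokes Lemma~\ref{l_aux3}, and without it these two products are unaccounted for. Concerning your final worry about weak closedness of the algebraic sum in \eqref{05}: the concern is legitimate in principle, but the paper simply asserts this and does not perform the rank-one bookkeeping you outline; as written your resolution is only a plan, and in any case it is not where the argument actually turns --- Lemma~\ref{l_aux3} is.
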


\begin{remark}\label{rem5} Notice that  $\K_V(\Ll)$ is a  subspace of $\T(\N)$ and that, by Lemma \ref{l_aux2}, the spaces $\K_V(\Ll)$ and $\K_L(\Ll)$ are contained in $\Ll$. 
\end{remark}
\begin{proof} 
It is clear that $\K(\Ll)$ and $\K_V(\Ll)$ are weakly closed subspaces of $\B(\H)$ and, as observed in  Remark \ref{rem5},  $\K_V(\Ll)\subseteq \T(\N)$. 

To see that $\K_V(\Ll)$ is an  ideal of $\T(\N)$, it suffices to show that, for all $T\in \Ll, P\in \N, S\in \T(\N)$ one has that  both $PTP^\perp S$ and  $SPTP^\perp$ lie in $\K_V(\Ll)$. 
 Since $P^\perp SP^\perp \in \T(\N)$ and since, by Lemma \ref{l_aux2}, $PTP^\perp$ lies in $\Ll$, it follows that 
 $$
PTP^\perp S=PTP^\perp P^\perp SP^\perp =[PTP^\perp ,P^\perp SP^\perp]
$$
lies in  $\Ll$. But $PTP^\perp S=P(PTP^\perp S)P^\perp$, which shows that $PTP^\perp S$ lies in $\K_V(\Ll)$.

Similarly, 
$$SPTP^\perp=PSPPTP^\perp=[PSP,PTP^\perp]$$
lies in $\Ll$ and, therefore, $SPTP^\perp=P(SPTP^\perp)P^\perp$ lies in $\K_V(\Ll)$.
 Since it has been shown that $\K_V(\Ll)$ is a weakly closed ideal of $\T(\N)$, it immediately follows from Theorem \ref{t_largebim} that $\K_V(\Ll)\subseteq \J(\Ll)$.

It will be shown next that $\K_L(\Ll) \T(\N), \T(\N)\K_L(\Ll)\subseteq \K(\Ll)$. It suffices to show that, for all $T\in \Ll, P\in \N$ and $S\in \T(\N)$, the operators 
$P^\perp TP S, SP^\perp TP$ lie in $\K(\Ll)$. Observe also that, if $T$ is an operator in the Lie module $\Ll$, then, by Lemma \ref{l_aux2}, the operator $P^\perp TP$ lies  in  $\Ll$. 
Hence, it suffices to assume that $T\in \Ll$ is such that 
 $T=P^\perp TP$, for some $P\in \N$, and then prove that $TS, ST\in \K(\Ll)$, for all $S\in \T(\N)$.

 Let $T$ be an operator in $\Ll$ such that $T=P^\perp TP$, and let $S$ be an operator in the nest algebra. It follows that 
  $$
\begin{aligned}
TS& =P^\perp TPSP+P^\perp TPSP^\perp .
\end{aligned}$$
 It is clear that $P^\perp TPSP^\perp \in \K_\Delta(\Ll)$. On the other hand, 
 $$
\begin{aligned}
P^\perp TPSP
&=[P^\perp TP, PSP]\\
&=P^\perp [T,PSP]P.
\end{aligned}$$
Since $[T,PSP]\in \Ll$, it follows   that  $P^\perp TPSP\in \K_L(\Ll)$. Hence,  $TS$ 
lies in $\K(\Ll)$, as required.

Similarly,
$$
ST=P^\perp ST+PST=[P^\perp SP^\perp ,T]+PSP^\perp TP
$$
lies in $\K(\Ll)$, since $PSP^\perp TP\in \K_D(\Ll)$ and $[P^\perp SP^\perp ,T]=P^\perp [P^\perp SP^\perp ,T] P$ lies in  $\K_L(\Ll)$.

\medskip

To show  that $\K_D(\Ll) \T(\N), \T(\N)\K_D(\Ll)\subseteq \K(\Ll)$, it suffices to prove that for all $T\in \Ll, S, R\in \T(\N)$ and $P\in \N$, the operators  $PSP^\perp TPR$ and $RPSP^\perp TP$ lie in $\K(\Ll)$. 

As to the operator $RPSP^\perp TP$, observe that $RPSP^\perp TP=P(RPS)P^\perp TP$ and, since $RPS\in \T(\N)$, it immediately follows that $RPSP^\perp TP\in \K_D(\Ll)$. Hence, $\T(\N)\K_D(\Ll)\subseteq \K(\Ll)$.

It only remains to show that $PSP^\perp TPR\in \K(\Ll)$.  Observe that, by Lemma \ref{l_aux3}, either $P\Ll P^\perp=P\B(\H)P^\perp$ or $P^\perp \Ll P =\{0\}$. In the latter case, it is obvious that the assertion to be proved  trivially holds. 
 In the former case, notice that, by Lemma \ref{l_aux2}, 
 $P\B(\H)P^\perp \subseteq \Ll$. 
 
Let $T,S,R$ be as above and let $P\in \N$ be such that $P\B(\H)P^\perp \subseteq \Ll$. Then,
\begin{align*}
PSP^\perp TPR&=PSP^\perp TPRP+PSP^\perp TPRP^\perp \\
&=PSP^\perp [P^\perp TP,PRP]P+PSP^\perp TPRP^\perp.
\end{align*}
As seen above, $P\B(\H)P^\perp \subseteq \Ll$ yielding that the operator  
$
PSP^\perp TPRP^\perp
$
 lies in $\Ll$. Consequently, 
 $$
PSP^\perp TPRP^\perp=P(PSP^\perp TPRP^\perp)P^\perp
$$
 lies in  $\K_V(\Ll)$. Moreover, by Lemma \ref{l_aux2}, $P^\perp TP\in \Ll$, from which follows that $[P^\perp TP,PRP]\in \Ll$.
 Hence,  $PSP^\perp [P^\perp TP,PRP]P\in \K_D(\Ll)$. It follows that $\K_D(\Ll) \T(\N)\subseteq \K(\Ll)$.
 
 Finally, it will be shown that $\K_\Delta(\Ll) \T(\N), \T(\N)\K_\Delta(\Ll)\subseteq \K(\Ll)$. That is to say that, it must be proved that, for all $T\in \Ll, S, R\in \T(\N)$ and $P\in \N$, the operators $P^\perp TPSP^\perp R$ and $R P^\perp TPSP^\perp$ lie in $\K(\Ll)$.
 
 Suppose again that $P\Ll P^\perp =P\B(\H)P^\perp$. Recall that, by Lemma \ref{l_aux3}, the only other possibility is $P^\perp \Ll P =\{0\}$, in which case  the assertions to be proved trivially hold.
 
 Since  $SP^\perp R\in\T(\N)$, it follows that 
$P^\perp TPSP^\perp R=P^\perp TP(SP^\perp R)P^\perp$
lies in  $\K_\Delta(\Ll)$.
Furthermore,
\begin{align*}
RP^\perp TPSP^\perp &=PRP^\perp TPSP^\perp +P^\perp RP^\perp TPSP^\perp \\
&=PRP^\perp TPSP^\perp + P^\perp [P^\perp RP^\perp ,P^\perp TP]PSP^\perp.
\end{align*}
 Observe that  $PRP^\perp TPSP^\perp \in P\B(\H)P^\perp \subseteq \K_V(\Ll)$, since it is assumed that  $P\Ll P^\perp =P\B(\H)P^\perp$. Moreover,  $P^\perp [P^\perp RP^\perp ,P^\perp TP]PSP^\perp$ lies in  $\K_\Delta(\Ll)$, since $[P^\perp RP^\perp ,P^\perp TP]\in\Ll$.
\end{proof}

\begin{lemma}\label{l_comm}Let $\Ll$ be a weakly closed Lie $\T(\N)$-module and let $\K(\Ll)$  be the weakly closed $\T(\N)$-bimodule associated with $\Ll$   in  \eqref{05}. Then 
$[\K(\Ll),\T(\N)]\subseteq \Ll$.
\end{lemma}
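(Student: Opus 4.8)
The plan is to reduce the containment $[\K(\Ll),\T(\N)]\subseteq\Ll$ to a statement about the generators of the four pieces of $\K(\Ll)$ appearing in \eqref{05}--\eqref{12}. Fix $R\in\T(\N)$. Since multiplication is separately continuous in the weak operator topology, the map $Z\mapsto[Z,R]=ZR-RZ$ is weakly continuous, and $\Ll$ is weakly closed; hence $\{Z\in\B(\H)\colon[Z,R]\in\Ll\}$ is a weakly closed subspace of $\B(\H)$. It therefore suffices to prove that $[X,R]\in\Ll$ whenever $X$ is one of the generators $PTP^\perp$, $P^\perp TP$, $PSP^\perp TP$ or $P^\perp TPSP^\perp$, with $P\in\N$, $T\in\Ll$ and $S\in\T(\N)$, and then let $R$ range over $\T(\N)$.

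For the generators of $\K_V(\Ll)$ and $\K_L(\Ll)$ this is immediate: by Lemma \ref{l_aux2} (see Remark \ref{rem5}) both $PTP^\perp$ and $P^\perp TP$ already lie in $\Ll$, so their commutators with $R$ lie in $[\Ll,\T(\N)]\subseteq\Ll$.

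The substance of the argument lies in the generators of $\K_D(\Ll)$. Fix $X=PSP^\perp TP$. I would first dispose of the degenerate case with Lemma \ref{l_aux3}: if $P^\perp\Ll P=\{0\}$ then $P^\perp TP=0$, whence $X=PS(P^\perp TP)=0$ and there is nothing to prove. Otherwise Lemma \ref{l_aux3} gives $P\Ll P^\perp=P\B(\H)P^\perp$, and combining this with Lemma \ref{l_aux2} (which shows $P\Ll P^\perp\subseteq\Ll$) yields $P\B(\H)P^\perp\subseteq\Ll$. Writing $R=PRP+PRP^\perp+P^\perp RP^\perp$ (there is no $P^\perp RP$ summand because $R\in\T(\N)$) and expanding, one obtains
\begin{equation*}
[X,R]=PSP^\perp TPRP^\perp+\bigl(PSP^\perp TPRP-PRPSP^\perp TP\bigr).
\end{equation*}
The first summand lies in $P\B(\H)P^\perp\subseteq\Ll$. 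For the parenthesised difference the key observation is that $PSP^\perp\in\T(\N)$: indeed, for any $W\in\B(\H)$ the operator $PWP^\perp$ lies in $\T(\N)$, since each $Q\in\N$ is comparable to $P$ and so either $P^\perp Q=0$ or $Q^\perp P=0$, forcing $Q^\perp(PWP^\perp)Q=0$. Consequently, putting $E=[P^\perp TP,\,PSP^\perp]=P^\perp TPSP^\perp-PSP^\perp TP$, the Lie module property together with Lemma \ref{l_aux2} gives $E\in\Ll$, and a direct computation (using $P^\perp P=0$) shows that $PSP^\perp TPRP-PRPSP^\perp TP=-[E,PRP]$. Since $PRP\in\T(\N)$, this lies in $[\Ll,\T(\N)]\subseteq\Ll$, and hence $[X,R]\in\Ll$.

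Finally, the generators of $\K_\Delta(\Ll)$ are handled by the very same operator $E$. For $Y=P^\perp TPSP^\perp$ one has $Y-X=E\in\Ll$, so $[Y,R]=[X,R]+[E,R]\in\Ll$ (and if $P^\perp\Ll P=\{0\}$ then $Y=(P^\perp TP)SP^\perp=0$). This settles all four families of generators and completes the proof. I expect the main obstacle to be the pair of \emph{diagonal} blocks $PSP^\perp TPRP$ and $PRPSP^\perp TP$ produced by the commutator: each is an element of $P\B(\H)P$ and need not lie in $\Ll$ on its own. The resolution is to recognise their difference as the single commutator $-[E,PRP]$ of an element $E\in\Ll$ with $PRP\in\T(\N)$, which in turn rests on the fact that the off-diagonal block $PSP^\perp$ is itself a member of $\T(\N)$.
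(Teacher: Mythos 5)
Your proof is correct and follows essentially the same route as the paper: the same reduction to the generators of the four summands, the same appeal to Lemmas \ref{l_aux2} and \ref{l_aux3} to get $P\B(\H)P^\perp\subseteq\Ll$, and the same double commutator $[[PSP^\perp,P^\perp TP],PRP]$ to absorb the diagonal blocks. Your two small refinements --- making explicit the weak closure of $\{Z\colon [Z,R]\in\Ll\}$, and dispatching $\K_\Delta(\Ll)$ via $Y=X+E$ with $E=[P^\perp TP,PSP^\perp]\in\Ll$ instead of a second direct expansion --- are sound and slightly streamline the paper's argument.
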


\begin{proof} Since $\K_V(\Ll),\K_L(\Ll)\subseteq \Ll$, 
it is enough to prove that $[\K_D(\Ll),\T(\N)],[\K_\Delta(\Ll),\T(\N)]\subseteq \Ll$.  That is to say that, it suffices to show that for all $T\in \Ll, P\in \N$ and $R, S\in \T(\N)$, the operators $[PSP^\perp TP, R]$ and $[P^\perp TPSP^\perp, R]$ lie in $\Ll$.

Recall once again that,  given $P\in \N$, by Lemma \ref{l_aux3},   either $P\Ll P^\perp =P\B(\H)P^\perp$ or $P^\perp \Ll P =\{0\}$. In the latter case, for all $T\in \Ll$,  $P^\perp TP=0$, from which follows that the assertions to be proved are  trivially true. 

Suppose  now that $P\Ll P^\perp =P\B(\H)P^\perp$ and that $T\in \Ll$ is such that $P^\perp TP\neq 0$, in which case, by Lemma \ref{l_aux2}, $P^\perp TP\in\Ll$. Then, for all $R,S\in \N$, 
\begin{align*}
[PSP^\perp TP,R]&=[PSP^\perp TP,RP]+[PSP^\perp TP,PRP^\perp ]
+[PSP^\perp TP,P^\perp RP^\perp ]\\
&=[PSP^\perp TP-P^\perp TPSP^\perp ,RP]
+PSP^\perp TPRP^\perp \\
&=[[PSP^\perp ,P^\perp TP],RP]+PSP^\perp TPRP^\perp 
\end{align*}
lies in  $\Ll.$
 Similarly, 
\begin{align*}
[P^\perp TPSP^\perp ,R]&=[P^\perp TPSP^\perp ,RP]+[P^\perp TPSP^\perp ,PRP^\perp ]
+[P^\perp TPSP^\perp ,P^\perp RP^\perp ]\\
&=-PRP^\perp TPSP^\perp 
+[P^\perp TPSP^\perp -PSP^\perp TP,P^\perp RP^\perp ]\\
&=-PRP^\perp TPSP^\perp 
+[[P^\perp TP,PSP^\perp ],P^\perp RP^\perp ]
\end{align*}
is an operator in $\Ll$, which concludes the proof.
\end{proof}

Recall that it is possible to associate with  each   weakly closed $\T(\N)$-bimodule  $\K$    a (not necessarily unique) left order continuous homomorphism 
 $\phi\colon \N\to\N$ such that  
 $
\K=\{ T\in \B(\H)\colon \phi(P)^\perp TP=0\}
$
  (see Lemma \ref{l_eqcond} and Remark \ref{rem03}).

\begin{lemma}\label{l_aux4} Let $\Ll$ be a weakly closed Lie $\T(\N)$-module, let $\K(\Ll)$ be the weakly closed $\T(\N)$-bimodule defined  in \eqref{05}--\eqref{12},  and let $\phi(P)\colon \N\to\N$ be a left order continuous homomorphism associated with  
$
\K(\Ll).
$ 
 If
 $P\in \N$ is such that $\phi(P)<P$, then, for all $T\in \Ll$ and all $Q\in \N$ with $\phi(P)<Q<P$, $(Q-\phi(P))T(P-Q)=0$.
\end{lemma}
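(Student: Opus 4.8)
The plan is to reduce the whole statement to one application of the $\phi$-description of $\K(\Ll)$. Write $E=\phi(P)$ and fix $T\in\Ll$ together with $Q\in\N$ satisfying $E<Q<P$. The key observation is that $QTQ^\perp$ is literally one of the generators appearing in the definition \eqref{09} of $\K_V(\Ll)$ (take the nest projection there to be $Q$ and the Lie module element to be $T$), so that
$$
QTQ^\perp\in\K_V(\Ll)\subseteq\K(\Ll).
$$
Since $\phi$ is a left order continuous homomorphism associated with $\K(\Ll)$, membership in $\K(\Ll)$ means precisely that $\phi(R)^\perp\,(QTQ^\perp)\,R=0$ for \emph{every} $R\in\N$, and the entire argument then consists in selecting the correct $R$.

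The correct choice is $R=P$ rather than $R=Q$; indeed $R=Q$ yields only the vacuous identity $Q^\perp Q=0$, whereas $R=P$ produces $E^\perp(QTQ^\perp)P=0$, in which both outer factors survive because $E<Q<P$. It then remains to simplify these two factors using the order relations alone. Since $E\le Q$ we have $EQ=E$, whence $E^\perp Q=Q-EQ=Q-E$; since $Q\le P$ we have $QP=Q$, whence $Q^\perp P=P-QP=P-Q$. Substituting and using associativity of the product gives
$$
(Q-E)\,T\,(P-Q)=(E^\perp Q)\,T\,(Q^\perp P)=E^\perp\,(QTQ^\perp)\,P=0,
$$
which is exactly the asserted identity.

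I do not expect any genuine computational obstacle here; the only point requiring care is conceptual, namely recognising that the off-diagonal corner $(Q-\phi(P))T(P-Q)$ is recovered from the single generator $QTQ^\perp\in\K(\Ll)$ precisely by compressing on the left by $\phi(P)^\perp$ and on the right by $P$. It is worth noting that this argument uses neither Lemma \ref{l_aux3} nor the weak closure of $\Ll$, and that the hypothesis $\phi(P)<P$ plays no role in the computation itself: it merely guarantees that the interval $\{Q\in\N\colon\phi(P)<Q<P\}$ can be nonempty, so that the conclusion is not vacuous.
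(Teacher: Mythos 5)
Your proof is correct and is essentially the paper's own argument: both observe that $QTQ^\perp$ is a generator of $\K_V(\Ll)\subseteq\K(\Ll)$, apply the $\phi$-description of $\K(\Ll)$ at the projection $P$, and use $\phi(P)\leq Q\leq P$ to rewrite $\phi(P)^\perp(QTQ^\perp)P$ as $(Q-\phi(P))T(P-Q)$. You merely spell out the projection algebra that the paper leaves implicit, and your closing remarks about which hypotheses are actually used are accurate.
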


\begin{proof}
Let $T$ be an operator in $\Ll$ and let that $P, Q\in \N$. Since, by the definition \eqref{05}--\eqref{12} of \K(\Ll),  $QTQ^\perp\in \K(\Ll)$, it follows that $\phi(P)^\perp (QTQ^\perp)P=0$. Hence,  if $\phi(P)<Q<P$, then
$(Q-\phi(P))T(P-Q)=0$, as required.
\end{proof}

 Given  a weakly closed Lie $\T(\N)$-module $\K$, define $\D_{\K}$ as
 the algebra consisting of all operators $T\in \D(\N)$ such that, for every $P\in \N$ for which $\phi(P)<P_{-}$, there exists $\lambda_P$ in $\mathbb{C}$ satisfying the equality $T\bigl(P-\phi(P)\bigr)=\lambda_P \bigl(P-\phi(P)\bigr)$. The algebra $\D_{\K}$ is a von Neumann subalgebra of $\D(\N)$ and, when $\K$ is a weakly closed Lie ideal of $\T(\N)$, the  algebra  $\D_{\K}$ is exactly that defined in \cite{HMS}.

The next  lemma is  inspired by results of    \cite{HMS} and by the proofs therein.
\begin{lemma}\label{l_diag}Let $\Ll$ be a weakly closed Lie $\T(\N)$-module. Then 
$\Ll\subseteq \K(\Ll)+\D_{\K(\Ll)}$.
\end{lemma}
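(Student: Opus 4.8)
The plan is to fix $T\in\Ll$ and construct $D\in\D_{\K(\Ll)}$ with $T-D\in\K(\Ll)$. Writing $\K(\Ll)=\{X\in\B(\H)\colon \phi(P)^\perp XP=0,\ P\in\N\}$ via Lemma \ref{l_eqcond} and Remark \ref{rem03} (with $\phi$ the homomorphism of Lemma \ref{l_aux4}), the inclusion $T-D\in\K(\Ll)$ becomes the requirement $\phi(P)^\perp(T-D)P=0$ for every $P$. Since $D$ will be taken in $\D(\N)$, it commutes with $\N$, so this reduces to matching, for each $P$, the ``gap block'' of $T$ against that of $D$. Only the projections with $\phi(P)\le P$ carry a constraint, since $\phi(P)\ge P$ gives $\phi(P)^\perp P=0$.

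First I would record the block structure of $T$ relative to $\phi(P)<P$. Because $\phi(P)^\perp T\phi(P)$ and $P^\perp TP$ are generators of $\K_L(\Ll)\subseteq\K(\Ll)$, the $\phi$-description forces $P^\perp TP=0$; likewise $(P-\phi(P))T\phi(P)=(P-\phi(P))[\phi(P)^\perp T\phi(P)]$ lies in $\K(\Ll)$ (as $(P-\phi(P))\in\D(\N)\subseteq\T(\N)$ and $\K(\Ll)$ is a bimodule), and comparing with $\phi(P)^\perp(\,\cdot\,)P$ forces it to vanish. Monotonicity of $\phi$ together with $\K_L(\Ll)\subseteq\K(\Ll)$ gives $(P-Q)T(Q-\phi(P))=0$ for intermediate $Q$, while Lemma \ref{l_aux4} gives $(Q-\phi(P))T(P-Q)=0$. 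Assembling these,
$$\phi(P)^\perp TP=(P-\phi(P))\,T\,(P-\phi(P))=:D_0^{(P)},$$
and $D_0^{(P)}$ commutes with every element of $\N$, so $D_0^{(P)}\in\D(\N)$.

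The crux is to show $D_0^{(P)}$ is a \emph{scalar} multiple of $P-\phi(P)$ whenever $\phi(P)<P_-$. In that case there is $Q\in\N$ with $\phi(P)<Q<P$; put $E_1=Q-\phi(P)$, $E_2=P-Q$. For a unit vector $f\in E_2\H$ and any $e\in E_1\H$, the rank one $f\otimes e$ lies in $\T(\N)$ (take $\Pi=Q$), so $[T,f\otimes e]\in\Ll$. Using the block structure above to reduce $Te=E_1TE_1e+\phi(P)Te$ and $T^*f=(E_2TE_2)^*f+P^\perp T^*f$, a direct computation yields
$$E_1\,[T,f\otimes e]\,E_2=f\otimes(E_1TE_1e)-(E_2TE_2)^*f\otimes e.$$
But Lemma \ref{l_aux4} applied to $[T,f\otimes e]\in\Ll$ makes the left-hand side vanish; evaluating the resulting rank-one identity at $f$ gives $E_1TE_1e=\langle E_2TE_2f,f\rangle\,e$ for all admissible $e,f$. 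Hence $E_1TE_1$ is a scalar multiple of $E_1$, and $\langle E_2TE_2f,f\rangle$ is independent of $f$, forcing $E_2TE_2$ to be the \emph{same} scalar multiple of $E_2$; therefore $D_0^{(P)}=E_1TE_1+E_2TE_2=\lambda_P(P-\phi(P))$.

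Finally I would assemble $D$ as the operator whose block on each gap $P-\phi(P)$ is $D_0^{(P)}$ and which vanishes off the gaps, realized as a weak-operator limit of sums of these mutually consistent diagonal blocks: on big gaps they are the scalars $\lambda_P$ just produced, while on atomic gaps ($\phi(P)=P_-$) they are arbitrary elements of $\D(\N)$, which the definition of $\D_{\K(\Ll)}$ permits. Then $D\in\D(\N)$, the scalarity yields $D\in\D_{\K(\Ll)}$, and since the gap blocks of $T$ and $D$ coincide we get $\phi(P)^\perp(T-D)P=0$ for all $P$, i.e.\ $T-D\in\K(\Ll)$. I expect the scalarity step of the preceding paragraph to be the main obstacle: the insight that a straddling rank one $f\otimes e\in\T(\N)$, combined with the within-gap vanishing of Lemma \ref{l_aux4}, upgrades ``the gap block is diagonal'' to ``the gap block is a single scalar'' across an entire, possibly atomless, interval. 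A secondary technical point is verifying that the blocks $D_0^{(P)}$ are consistent on overlapping gaps so that $D$ is well defined as a weak limit.
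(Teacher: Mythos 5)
Your route is genuinely different from the paper's. The paper obtains the diagonal summand for free from a conditional expectation $\pi$ of $\T(\N)$ onto $\D(\N)$ (\cite{kd}, Corollary 8.5): it writes $T=T_\pi+\pi(T)$ with $T_\pi=T-\pi(T)$, proves $T_\pi\in\K(\Ll)$ by showing that each $\phi(P)^\perp T_\pi P$ has vanishing $Q(\cdot)Q^\perp$ and $Q^\perp(\cdot)Q$ corners for every $Q\in\N$ (using that $QTQ^\perp$ and $Q^\perp TQ$ generate part of $\K(\Ll)$ and that $\pi$ is a module map over the diagonal), hence lies in $\D(\N)$ and therefore equals $\pi(\phi(P)^\perp T_\pi P)=\phi(P)^\perp\pi(T_\pi)P=0$; it then shows $\pi(T)\in\D_{\K(\Ll)}$ by the same straddling-rank-one device you use, applied to $[\pi(T),(Q-\phi(P))(x\otimes y)(P-Q)]$, which lies in $\Ll$ because $[T_\pi,\T(\N)]\subseteq\Ll$ by Lemma~\ref{l_comm}. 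Your local computations are correct: $P^\perp TP=0$ and $(P-\phi(P))T\phi(P)=0$ do follow from $\K_L(\Ll)\subseteq\K(\Ll)$ and the $\phi$-description of $\K(\Ll)$, the cross terms inside a single gap vanish, the rank one $f\otimes e$ does lie in $\T(\N)$, and the scalarity argument via $E_1[T,f\otimes e]E_2=0$ is sound; it is essentially the paper's own scalarity step carried out on $T$ instead of $\pi(T)$.

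The genuine gap is the final assembly of $D$, which you defer as a ``secondary technical point'' but which is precisely the cost of bypassing the expectation. The intervals $(\phi(P),P]$ need not be disjoint: they overlap and can accumulate, so ``the operator whose block on each gap is $D_0^{(P)}$ and which vanishes off the gaps'' is not yet a defined object. To complete this route you must (a) organize the gaps into maximal overlapping chains; (b) check not only that $D_0^{(P)}$ and $D_0^{(P')}$ agree on the intersection of two overlapping gaps, but also that the cross terms between them vanish (for instance $(P'-P)T(\phi(P)-\phi(P'))=0$ and its adjoint-position companion when $\phi(P')\le\phi(P)<P\le P'$), so that the block carried by a whole chain is the compression of $T$ to its union; and (c) verify that the resulting block-diagonal operator is a WOT-convergent sum lying in $\D(\N)$, that a single scalar serves an entire chain of overlapping big gaps, and that $D(P-\phi(P))=\lambda_P(P-\phi(P))$ survives the passage to the limit. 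All of these can be pushed through with the identities you already have (the cross-term vanishing follows from Lemma~\ref{l_aux4} and the $\phi$-description exactly as in your within-gap computation), but none of it is automatic, and it is the portion of the argument that the paper's choice of $\pi(T)$ as the diagonal summand is designed to eliminate.
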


\begin{proof} 
Let $\pi$ be an expectation of $\T(\N)$ on $\D(\N)$ (see \cite{kd}, Corollary 8.5). Given $T\in \Ll$,  let $T= T_\pi +\pi(T)$, where
$T_\pi=T-\pi(T)$. Firstly, it will be shown that $T_\pi\in \K(\Ll)$; that is to say that, for all $P\in \N$, $\phi(P)^\perp T_\pi P=0$, where $\phi\colon \N\to\N$ is a left order continuous homomorphism on $\N$ associated with the  bimodule $\K(\Ll)$.

Let $Q$ be a projection in $\N$. Notice  that $\phi(P)^\perp Q^\perp TQP=0$, since  $Q^\perp TQ\in \K(\Ll)$ (see \eqref{05}--\eqref{12}). Then, 
\begin{align*}
Q^\perp (\phi(P)^\perp T_\pi P)Q&=\phi(P)^\perp (Q^\perp T_\pi Q)P\\
&=\phi(P)^\perp Q^\perp TQP-\phi(P)^\perp Q^\perp\pi(T)QP\\
&=-\phi(P)^\perp Q^\perp\pi(T)QP.
\end{align*}
But, since by \cite{kd}, Theorem 8.1,
$\phi(P)^\perp Q^\perp\pi(T)QP=\pi\bigl(\phi(P)^\perp Q^\perp TQP\bigr)$, it follows that, for all $Q\in \N$,  
$Q^\perp (\phi(P)^\perp TP)Q=0$. 
 Similarly,
\begin{align*}
Q\phi(P)^\perp T_\pi PQ^\perp&=\phi(P)^\perp (QTQ^\perp)P-\phi(P)^\perp Q\pi(T)Q^\perp P\\
&=-\pi\bigl(\phi(P)^\perp QTQ^\perp P\bigr)=0.
\end{align*}
Hence,  for all $P, Q\in \N$, 
\begin{equation}\label{eq10}
\phi(P)^\perp T_\pi P=Q\phi(P)^\perp T_\pi PQ + Q^\perp \phi(P)^\perp T_\pi PQ^\perp,
\end{equation}
from which follows that  $\phi(P)^\perp T_\pi P\in \D(\N)$. 
 Hence, by \cite{kd}, Theorem 8.1,
\begin{align*}
\phi(P)^\perp T_\pi P&=\pi(\phi(P)^\perp T_\pi P)\\
&=\phi(P)^\perp \pi(T_\pi) P\\
&=\phi(P)^\perp \pi(T-\pi(T)) P.
\end{align*} 
Since  $\pi(T-\pi(T))=0$, it follows that, for all $P\in \N$, $\phi(P)^\perp T_\pi P=0$ or, in other words, $T_\pi$ lies in $\K(\Ll)$.

It remains to show that $\pi(T)$ lies in $\D_{\K(\Ll)}$. Let $P\in \N$ be such that $\phi(P)< P_{-}$. Then, there exists a projection $Q\in \N$ such that $\phi(P)< Q< P$. 

Since $QTQ^\perp \in \K(\Ll)$ (see \eqref{05}--\eqref{12}), it follows that, for all $P\in \N$, $\phi(P)^\perp(QTQ^\perp)P=0$. Observe also that, since $T_{\pi}\in \K(\Ll)$, by Lemma \ref{l_comm},  $[\pi(T), \T(\N)]\subseteq \Ll$.  Hence, for all  $x,y\in \H$,  $[\pi(T),(Q-\phi(P))(x\otimes y)(P-Q)]\in \Ll$. It follows, by Lemma \ref{l_aux4}, that
$$
(Q-\phi(P))[\pi(T),(Q-\phi(P))(x\otimes y)(P-Q)](P-Q)=0
$$
and, consequently,
$$
((P-Q)x\otimes(Q-\phi(P))\pi(T)(Q-\phi(P))y)=
((P-Q)\pi(T)^*(P-Q)x\otimes (Q-\phi(P))y)
$$
Choosing $x,y\in \H$ such that $x=(P-Q)x$ and $y=(Q-\phi(P))y$, it is easy to see that    there must exist $\lambda_P\in\mathbb{C}$ such that
$$
\pi(T)(P-Q)=\lambda_P(P-Q)
$$
and 
$$\pi(T)\bigl(Q-\phi(P)\bigr)=\lambda_P\bigl(Q-\phi(P)\bigr).
$$
 It follows that 
$$
\pi(T)\bigl(P-\phi(P)\bigr)=\lambda_P\bigl(P-\phi(P)\bigr),
$$
 yielding that $\pi(T)$ lies in $\D_{\K(\Ll)}$, as required.
\end{proof}

Given a weakly closed Lie $\T(\N)$-module, let $\J(\Ll)$ be the  $\T(\N)$-bimodule defined at the beginning of this section and let $\K(\Ll)$ be defined by \eqref{05}--\eqref{12}. The next theorem  summarises the main results of Section \ref{liemod}. 

\begin{theorem}\label{t_sum}
Let $\Ll$ be a weakly closed Lie $\T(\N)$-module. Then, there exist weakly closed  $\T(\N)$-bimodules $\J(\Ll)$ and $\K(\Ll)$ and a von Neumman subalgebra $\D_{\K(\Ll)}$ of the diagonal $\D(\N)$ such that 
$$
\J(\Ll)\subseteq \Ll \subseteq  \K(\Ll) +\D_{\K(\Ll)}.
$$
 Moreover, 
 $$\J(\Ll) =\fecho{\sspan}^w\bigl(\{Q\B(\H) P^\perp  \colon  P,Q\in \N, Q\B(\H) P^\perp \subseteq \Ll\}\bigr)
 $$
  is the largest weakly closed  $\T(\N)$-bimodule contained in $\Ll$ and $\K(\Ll)$ is such that 
   $[\K(\Ll),\T(\N)]\subseteq \Ll$.
\end{theorem}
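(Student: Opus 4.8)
The plan is to assemble the theorem from the lemmas already established in this section, so the proof is essentially an act of collection rather than fresh computation. First I would dispatch the bimodule $\J(\Ll)$: applying Corollary \ref{c_largebim} (or, equivalently, Theorem \ref{t_largebim}) to the weakly closed subspace $\Ll$ yields at once that $\J(\Ll)=\fecho{\sspan}^w(\C(\Ll))$ is the largest weakly closed $\T(\N)$-bimodule contained in $\Ll$, giving both the inclusion $\J(\Ll)\subseteq\Ll$ and the maximality claim. The equality of this description with the one displayed in the statement, namely $\fecho{\sspan}^w(\{Q\B(\H)P^\perp : P,Q\in\N, Q\B(\H)P^\perp\subseteq\Ll\})$, is precisely the content of the final equality in Theorem \ref{t_largebim}, so no separate argument is needed.

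Next I would turn to the right-hand side. Lemma \ref{l_bimk} shows that the subspace $\K(\Ll)$ defined in \eqref{05}--\eqref{12} is a weakly closed $\T(\N)$-bimodule, and Lemma \ref{l_comm} gives the commutator condition $[\K(\Ll),\T(\N)]\subseteq\Ll$. For the upper containment $\Ll\subseteq\K(\Ll)+\D_{\K(\Ll)}$ I would simply quote Lemma \ref{l_diag}. Finally, the assertion that $\D_{\K(\Ll)}$ is a von Neumann subalgebra of $\D(\N)$ was recorded at the point where $\D_{\K}$ was defined, just before Lemma \ref{l_diag}; taking $\K=\K(\Ll)$ there furnishes the required algebra.

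Since every ingredient has been proved in isolation, the only point worth checking is that they are mutually compatible, that is, that the homomorphism $\phi$ used to define $\D_{\K(\Ll)}$ is the same one associated with the bimodule $\K(\Ll)$ in Lemmas \ref{l_aux4} and \ref{l_diag}. This is guaranteed because $\D_{\K(\Ll)}$ is defined relative to a fixed choice of left order continuous homomorphism associated with $\K(\Ll)$ through Lemma \ref{l_eqcond}, and Lemma \ref{l_diag} is carried out for that same choice. I do not expect any genuine obstacle at this final stage; the substantive work has already been absorbed into Lemma \ref{l_diag} (the upper containment) and into the verification in Lemma \ref{l_bimk} that the four summands $\K_V,\K_L,\K_D,\K_\Delta$ close up into a single bimodule with the correct commutator behaviour.
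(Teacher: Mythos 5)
Your proposal is correct and matches the paper exactly: the paper offers no separate argument for Theorem \ref{t_sum}, explicitly presenting it as a summary assembled from Theorem \ref{t_largebim} (the description and maximality of $\J(\Ll)$), Lemma \ref{l_bimk} (that $\K(\Ll)$ is a weakly closed bimodule), Lemma \ref{l_comm} (the commutator inclusion), and Lemma \ref{l_diag} (the containment $\Ll\subseteq\K(\Ll)+\D_{\K(\Ll)}$). Your added remark about keeping the homomorphism $\phi$ consistent across Lemmas \ref{l_aux4} and \ref{l_diag} is a sensible check and consistent with how the paper fixes that choice.
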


\begin{example}\label{example}
Notice  that neither is it necessarily the case   that $\J(\Ll)\subseteq \K(\Ll)$ nor  that $\Ll \subseteq  \K(\Ll)$. A simple counter-example can be given in the nest algebra of the $5\times 5$ upper triangular  complex matrices. Consider the Lie module $\Ll=\spn  \{I\} + \J(\Ll)$, where $\J(\Ll)$ is the bimodule consisting of the  $5\times 5$   complex  matrices such that 
$a_{i1}=0$, if $1\leq i\leq 5$, and $a_{i2}=0$, if $3\leq i\leq 5$. In this case, $\K(\Ll)$ consists of the matrices in $\J(\Ll)$ such that $a_{22}=0$.
\end{example}

\begin{remark}\label{finalrem}
When $\Ll$ is a weakly closed Lie ideal, $\K_L(\Ll), \K_D(\Ll), \K_\Delta(\Ll)=\{0\}$. In this situation, it has been shown in \cite{AO} that there exists a certain unital weakly closed  $*$-subalgebra $\breve{\mathcal{D}}(\Ll)$  of  $\D_{\K(\Ll)}$ such that
$$
\K(\Ll)\subseteq \J(\Ll)\subseteq \Ll \subseteq  \K(\Ll) +\D_{\K(\Ll)}=\J(\Ll) \oplus\breve{\mathcal{D}}(\Ll).
$$
\end{remark}
%****************************************************************

\end{document}